\newtheorem{theorem}{Theorem}[section]
\newtheorem{lemma}{Lemma}[section]
\newtheorem{corollary}{Corollary}[section]
\newtheorem{proposition}{Proposition}[section]
\newtheorem{definition}{Definition}[section]
\newtheorem{remark}{Remark}[section]
\newcommand{\ma}[1]{\ensuremath{\mathbb{#1}}}
\font\bb=msbm7 at 10 pt
\def \C {\hbox{\bb C}}
\def \Z {\hbox{\bb Z}}
\def \Q {\hbox{\bb Q}}
\def \N {\hbox{\bb N}}
\def \F {\hbox{\bb F}}
\def \R {\hbox{\bb R}}
\def \Tr {\mbox{\rm{Tr}}}
\def \T {\mathcal{T}}
\def \X {\mathcal{X}}
\def \AA {\mathcal{A}}
\def \B {\mathcal{B}}
\def \H {\mathcal{H}}
\def \U {\mathcal{U}}
\def \O {\mathcal{O}}
\def \K {\mathcal{K}}
\def \Q {\hbox{\bb Q}}
\def \II {\mathcal{I}}
\def \sgn {\mbox{\rm{sgn}}}
\newcommand{\GNP}{\ensuremath{\mbox{\rm{GNP}}}}
\newcommand{\NP}{\ensuremath{\mbox{\rm{NP}}}}
\newcommand{\Ima}{\ensuremath{\mbox{\rm{Im }}}}
\newcommand{\Gal}{\ensuremath{\mbox{\rm{Gal}}}}
\author{R\'egis Blache}
\address{\'Equipe AOC,
IUFM de la Guadeloupe}
\email{rblache@iufm.univ-ag.fr}
\title[First vertices]{First vertices for generic Newton polygons}
\begin{document}

\begin{abstract}
In this paper we compute the first vertex of a generic Newton polygon in some special cases, and the corresponding Hasse polynomial. This allows us to show the nonexistence of $p$-cyclic coverings of the projective line in characteristic $p$ with supersingular jacobian for some (infinite families of) genera.
\end{abstract}

\subjclass[2000]{11L,14H}
\keywords{Character sums, $L$-functions, Newton polygons, Artin-Schreier curves}

\maketitle

\section{Introduction} 

A motivation for this paper is the study of the Torelli locus of jacobians of genus $g$ curves inside the space of principally polarized abelian varieties of dimension $g$. One approach for this problem is via Newton polygons \cite{oo2}. This seems to date back to Manin \cite{ma} and his ``formal types". The Newton polygon stratification of the space of principally polarized abelian varieties over a finite field was intensely studied by Oort and others, and much is known, in particular about the supersingular stratum, the one with the ``higher" possible polygon. Here are some open questions about the intersections of the Torelli locus with the Newton strata 
\begin{itemize}
	\item does there exist for any $g$, $p$, a curve of genus $g$ over $\overline{\F}_p$ with supersingular jacobian ?
	\item does there exist Newton strata that do not meet the Torelli locus ?
	\item the same questions with the hyperelliptic locus...
\end{itemize}
We do not pretend to answer any of these questions here, at least at this level of generality. 

\medskip

We shall study Artin Schreier curves, i.e. $p$-cyclic covering of the projective line in characteristic $p$. These curves have the interesting property that, whereas a generic curve is ordinary (i.e. has lower possible Newton polygon, i.e. has $p$-rank $g$), it is easy from Deuring Shafarevic formula to construct Artin Schreier curves with little $p$-rank \cite{pz}. Thus they are well suited to the study of the intersection of the Torelli locus with strata associated to ``high" polygon. Moreover in even characteristic, they are exactly the hyperelliptic curves. Let us recall what is known.

Van der Geer and van der Vlugt \cite{vv1} construct Artin-Schreier curves with supersingular jacobians, and from the fibered products of these curves, deduce that in characteristic $2$ there are curves of any genus with their jacobian supersingular, answering the first question above for $p=2$. In \cite{sz1}, Scholten and Zhu prove that there is no hyperelliptic curve of genus $2^n-1$, $n\geq 3$, in characteristic $2$. This result stands in striking contrast with the one of van der Geer and van der Vlugt. When $p$ is odd not much is known \cite{vv2}, \cite{sz3}; in this paper we shall show there is no $p$-cyclic covering of the projective line with supersingular jacobian and genus $g=\frac{(p-1)(d_0-1)}{2}$ when $d_0\in\{p^n-1,2p^n-2\}$, and $n(p-1)>2$.

\medskip

Let us introduce some notations. Since the numerator of the zeta function of an Artin-Schreier curve can be expressed as a product of $L$-functions associated to additive characters, we focus on these objects. Note that they have their own interest (see for instance \cite{wa}).

Consider a finite subset $D\subset \N_+$ consisting of positive integers; let $d_0=\max D$, and $k$ denote a finite field with $q:=p^m$ elements. In this paper we set
$$k[x]_D:=\left\{ f(x)=\sum_{D}a_d x^d,~a_d\in k,~a_{d_0}\in k^\times\right\}.$$

Let $k_r$ denotes the degree $r$ extension of $k$ inside a fixed algebraic closure $\overline{\F}_p$ of $k$, and for $\psi$ a non trivial additive character of $k$, denote by $\psi_r:=\psi\circ \Tr_{k_r/k}$ its extension to $k_r$. If $f$ is a degree $d_0$ polynomial over $k$, we define the exponential sums

$$S_r(f):=\sum_{x\in k_r} \psi_r(f(x)),~r\geq 1.$$

To the sequence $(S_r(f))_{r\geq 1}$ one associates the $L$-function

$$L(f;T):=\exp\left(\sum_{r\geq 1} S_r(f)\frac{T^r}{r}\right).$$

It is well known since Weil that this is a polynomial of degree $d_0-1$ in $\Z[\zeta_p][T]$, all of whose reciprocal roots are $q$-Weil numbers of weight $1$, i.e. algebraic integers all of whose conjugates have complex absolute value $q^\frac{1}{2}$. It remains to consider the question of their $p$-adic valuation. We shall give results on the Newton polygon $\NP_q(f):=\NP_q(L(f;T))$, which is very close to the one of the Artin Schreier curve $y^p-y=f(x)$.

As calculations in low degree show, this is in general a very difficult question, and we shall restrict our attention to generic Newton polygons. Assume the polynomial $f_t$ varies in a family parametrized by $t$ varying in an affine scheme $\AA/\F_p$. From Grothendieck's specialization theorem \cite{ka}, there is a Zariski dense open subset $\U$ in $\AA$ (the {\it open stratum}) and a {\it generic Newton polygon} $\GNP(\AA,p)$ such that 

\begin{itemize}
	\item[i/] for any $t\in \U(k)$, we have $\NP_q(f_t)=\GNP(\AA,p)$;
	\item[ii/] moreover, $\NP_q(f_t)\preceq \GNP(\AA,p)$ for any $t\in \AA(k)$.
\end{itemize}

\medskip

Let $\AA_D/\F_p$ denote the affine scheme parametrizing polynomials in $\overline{\F}_p[x]_D$. The aim of these notes is to determine, under a technical hypothesis, the first vertex of the polygon $\GNP(D,p):=\GNP(\AA_D,p)$. Up to an homothety of scale $p-1$ this is equivalent to the following construction. Let us define a $p$-divisible group $\X\rightarrow \AA_D$ by setting $\X_f$ the $p$-divisible group of the jacobian of the curve $y^p-y=f(x)$. Then the Newton polygon of the generic fiber $\X_\eta$ is exactly $\GNP(D,p)$. 

Moreover we show that the subset of polynomials such that $\NP_q(f)$ and $\GNP(D,p)$ have the same first vertex is the complement of an affine subscheme of $\AA_D$ defined by the vanishing of a polynomial $\H_{D,p}^1$ that we give explicitely, and call the {\it Hasse polynomial}.

\medskip

This question has already been studied, and we now recall what is known. 

When $p\geq d$, the first slope is known \cite[Theorem 1.1]{sz3}; actually when $p\geq 3d$ we know the whole generic Newton polygon \cite{bf}. The problem is much harder when $d>p$, and it includes many interesting cases, such as hyperelliptic curves in characteristic $2$. For this reason, this case has drawn much attention, and the works we are aware only treat the case $p=2$. In \cite{sz1}, Scholten and Zhu give the first slope of $\GNP(D,2)$ for any $D=\{1\leq i\leq 2n+1,~(i,2)=1\}$, and give necessary conditions on a polynomial $f\in \F_q[x]_D$, $q=2^m$, so that $\GNP(D,2)$ and $\NP_q(f)$ have the first slope. In order to do this, they compute the Verschiebung action on the first de Rham cohomology of a curve by taking power series expansions at a rational point \cite{ny}, \cite{sz3}, and they use Katz's sharp slope estimate \cite{ka}. In \cite{sz2}, they use the same method to determine the hyperelliptic curves with supersingular jacobian in characteristic $2$ for any genus $g\leq 8$.

\medskip

In \cite{bl}, the author gives the first slope of any polygon $\GNP(D,p)$, building on the work of Moreno et al. \cite{mm}. This work relies on a lower bound for the valuation of the exponential sums $S_r(f)$, $f\in k[x]_D$, $r\geq 1$, and the existence of polynomials for which the bound is attained. But it does not say anything about which polynomials attain this bound. We precise this work here, giving the first vertex in some cases, and the Hasse polynomial. 

Our method is very close to Dwork's original one: we shall consider precisely the matrix $A_m$ of Dwork's operator (associated to our situation), acting on a space of overconvergent functions. Actually the approach is mainly inspirated by Robba's paper \cite{ro}. It relies on an estimate of the valuation of the coefficients of Dwork's splitting functions in Lemma \ref{coeffssplit}. This estimate allows us to give a link between with the terms of minimal valuation of the principal minors of $A_m$ and certain solutions (called minimal) of modular equations associated to $D$ and $p$, as in \cite{mm}. Since these minors are the building blocks for the coefficients of the $L$-function, we get a congruence on the last coefficient of the $L$-function on the first slope, giving the last vertex and corresponding Hasse polynomial.

Even if it depends on a technical hypothesis, this result allows us to find in another way the known cases: Theorem 1.1 in \cite{sz3}, and $D=\{1\leq i\leq 2n+1,~(i,2)=1\}$ in characteristic $2$. But the main interest is to give such results in odd characteristic $p$, for $D=\{1\leq i\leq d_0\},~(i,p)=1\}$ when $p^n-1\leq d_0\leq 2p^n-2$. We deduce that for $d_0\in\{p^n-1,2p^n-2\}$, all Newton polygons associated to polynomials of degree $d_0$ have the same first slope $\frac{1}{n(p-1)}$.

\medskip

After a brief review of the main results from $p$-adic cohomology that we need here, we recall the necessary material from \cite{bl} and \cite{mm} in section 2, and give the link with the valuation of minors in section 3. Under a technical condition, we get in this way a congruence for the last coefficient of the $L$ function on the first slope in Theorem \ref{mainth}. In section 4, we give applications of this Theorem, and deduce the nonexistence of supersingular Artin-Schreier curves for some (infinite families of) genera.

\section{Cohomology}

Here we briefly describe the cohomological tools that we shall use in the paper. Everything could be described in terms of rigid cohomology, but for sake of simplicity we shall adopt the point of view of Robba \cite{ro}, which has the benefit to be explicit.

\medskip

We denote by $\Q_p$ the field of $p$-adic numbers, and by $\K_m=\Q_p(\zeta_{q-1})$ its 
(unique up to isomorphism) unramified extension of degree $m$. Let 
$\O_m=\Z_p[\zeta_{q-1}]$ be the valuation ring of $\K_m$; the elements of finite order in 
$\O_m^\times$ form a group $\T_m^\times$ of order $p^m-1$, and 
$\T_m:=\T_m^\times\cup\{0\}$ is the {\it Teichm\"uller} of $\K_m$. Note that 
it is the image of a section of reduction modulo $p$ from $\O_m$ to its 
residue field $\F_q$, called the {\it Teichm\"uller lift}. Let $\tau$ be the Frobenius; it is the 
generator of $\Gal(\K_m/\Q_p)$ which acts on $\T_m$ as the $p$th power 
map. Finally we denote by $\C_p$ a completion of a fixed algebraic 
closure $\overline{\Q}_p$ of $\Q_p$.

\medskip

Let $\pi \in \C_p$ be a root of the polynomial $X^{p-1}+p$. It is well known that $\Q_p(\pi)=\Q_p(\zeta_p)$ is a totally ramified extension of degree $p-1$ of $\Q_p$. We shall 
frequently use the valuation $v:=v_\pi$, normalized by $v_\pi(\pi)=1$, 
instead of the usual $p$-adic valuation $v_p$, or the $q$-adic valuation 
$v_q$. 

\medskip

For any $\omega \in \C_p$, $r\in \R$, we denote by $B(\omega,r^+)$ the closed  ball in $\C_p$ with center $\omega$ and radius $r$. Let $A:=B(0,1^+)$. We consider the space $\H^\dagger(A)_0$ of overconvergent 
analytic functions on $A$ with no constant term.

\medskip

We define the power series $\theta(X):=\exp(\pi X-\pi X^p)$; this is a {\it splitting function} in Dwork's terminology ({\it cf.} \cite{dw} p55). Its values at the points of $\T_1$ are $p$-th roots of unity; in other words this function represents an additive character of order $p$. It is well known that 
$\theta$ converges for any $x$ in $\C_p$ such that 
$v_p(x)>-\frac{p-1}{p^2}$, and in particular $\theta \in \H^\dagger(A)$. Let 
$$\theta_m(X):=\prod_{i=0}^{m-1} \theta(X^{p^i})=\exp(\pi X-\pi X^q).$$

If $f(X):=\alpha_dX^d+\dots+\alpha_1X$, $\alpha_d\neq 0$ is a 
polynomial of degree $d$, prime to $p$, over $k$, we denote by 
$\widetilde{f}(x):=a_dX^d+\dots+a_1X \in \O_m[X]$ the polynomial whose 
coefficients are the Teichm\"uller lifts of those of $f$.

\medskip

We define the functions 
$$\begin{array}{rcl}
F_1(X) & := & \prod_{i=1}^d  \theta(a_iX^i):=\sum_{n\geq0} f^{(1)}_nX^n,\\
F_m(X) & := & \prod_{i=0}^{m-1} F^{\tau^i}(X^{p^i})=\prod_{i=1}^d \theta_m(a_iX^i):=\sum_{n\geq0} f^{(m)}_nX^n;\\
\end{array}$$
since $\theta$ is overconvergent, $F_1$ and $F_m$ 
also, and we get $F_m\in \H^\dagger(A)$. 

\medskip

Consider the mapping $\psi_q$ defined on $\H^\dagger(A)$ by 
$\psi_q h(x):=\frac{1}{q}\sum_{z^q=x}h(z)$; if $h(X)=\sum b_nX^n$, then 
$\psi_q h(X)=\sum b_{qn}X^n$. Let $\alpha_m:=\psi_q \circ F_m$; this is an operator over $\H^\dagger(A)_0$, and Dwork's trace 
formula gives the following (cf \cite{ro}, p235)
$$L(f,T)=\frac{\det(1-T\alpha_m)}{\det(1-qT\alpha_m)}.$$

Since we are looking for the first slope of a Newton polygon in a one dimensional situation, it is less than or equal to $\frac{1}{2}$ and we just have to consider the first slope of the Newton polygon of $\det(1-T\alpha_m)$. In the basis $\B(X,X^2,\cdots)$ of $\H^\dagger(A)_0$, the matrix of $\alpha_m$ is given by 
$$A_m=(f^{(m)}_{qi-j})_{i,j\geq 1}.$$

Recall the factorization of $\alpha_m$. Let $\alpha_1$ be the endomorphism of $\H^\dagger(A)_0$ defined by 
$\alpha_1=\psi_p\circ F_1$; we have $\alpha_m=\alpha_1^{\tau^{m-1}}\dots \alpha_1^{\tau}\alpha_1$. Thus if $A_1$ is the matrix of $\alpha_1$ with respect to the basis $\B$, we have
$$A_m=A_1^{\tau^{m-1}}\cdots A_1,\qquad A_1=(f^{(1)}_{pi-j})_{i,j\geq 1}.$$

\section{Some technical results and tools}

The aim of this section is to recall some facts and notations about solutions of certain modular equations. The ideas come from work of Moreno, Kumar, Castro and Shum \cite{mm}. 

We begin by giving a precise estimate for the valuations of the coefficients of the series $\theta_m(X):=\sum_{n\geq 0} \lambda^{(m)}_n X^n$, then we recall facts from \cite{bl} on the $p$-density of the set $D$.

\subsection{Coefficients of the splitting function}

\begin{definition}
For $n$ a non negative integer, we denote by $s_p(n)$ the $p$-weight of $n$ : in other words, if $n=n_0+pn_1+\cdots+p^tn_t$ with $0\leq n_i\leq p-1$, we have $s_p(n)=n_0+\dots+n_t$.
Moreover we set $n!!:=n_0!\dots n_{t-1}!$.
\end{definition}

We have (compare Stickelberger's theorem on Gauss sums \cite{st})

\begin{lemma}
\label{coeffssplit}
Write the integer $n$ as in the definition above. In the ring $\Z_p[\zeta_p]$, we have
\begin{itemize}
	\item[i/] $\lambda^{(m)}_n\equiv \frac{\pi^{s_p(n)}}{n!!} \mod \pi^{s_p(n)+p-1}$ if $0\leq n\leq q-1$;
	\item[ii/] $v(\lambda^{(m)}_n)\geq s_p(n)+p-1$ if $n\geq q$.
\end{itemize}
\end{lemma}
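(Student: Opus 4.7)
The plan is to deduce both parts from the identity $\theta_m(X)=\exp(\pi X)\exp(-\pi X^q)$, which follows from the telescoping $\sum_{i=0}^{m-1}(\pi X^{p^i}-\pi X^{p^{i+1}})=\pi X-\pi X^q$. Multiplying the two exponentials out yields the exact formula
\[\lambda_n^{(m)}=\sum_{k=0}^{\lfloor n/q\rfloor}(-1)^k\,\frac{\pi^{n-qk}}{(n-qk)!}\cdot\frac{\pi^k}{k!}.\]
For (i), when $0\leq n\leq q-1$ only $k=0$ survives, so $\lambda_n^{(m)}=\pi^n/n!$, and the refinement reduces to evaluating $n!$ modulo the right power of $\pi$. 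I would invoke the classical Wilson-type factorization $n!=p^{v_p(n!)}\cdot u_n\cdot\prod_i n_i!$, with $u_n\equiv(-1)^{v_p(n!)}\pmod{p}$ (obtained by iterating Wilson's theorem on residue classes mod $p,p^2,\ldots$) and $v_p(n!)=(n-s_p(n))/(p-1)$ by Legendre. Using $p=-\pi^{p-1}$ this rearranges to $\pi^n/n!\equiv\pi^{s_p(n)}/\prod_i n_i!\pmod{\pi^{s_p(n)+p-1}}$, and this approximation in fact holds for every $j\geq 0$, not only for $j\leq q-1$.

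For (ii), when $n\geq q$ I would apply the strengthened approximation to both factors $\pi^{n-qk}/(n-qk)!$ and $\pi^k/k!$ in each summand. The $k$-th term becomes
\[(-1)^k\,\frac{\pi^{s_p(n-qk)+s_p(k)}}{(n-qk)!!\cdot k!!}\quad\text{modulo}\quad \pi^{s_p(n-qk)+s_p(k)+p-1}.\]
Writing $n=n_L+qn_H$ with $0\leq n_L<q$ and $n_H\geq 1$, the classical borrow-counting identity $s_p(n_H-k)+s_p(k)=s_p(n_H)+(p-1)B(n_H,k)$ immediately settles every $k$ that produces at least one base-$p$ borrow: the corresponding term has valuation $\geq s_p(n)+p-1$, which is already the bound we want.

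It remains to sum over the borrow-free $k$, i.e.\ those whose base-$p$ digits $k_j$ are dominated by the digits $m_j$ of $n_H$. For such $k$ one has $(n-qk)!!=\prod_{i<m}n_i!\cdot\prod_j(m_j-k_j)!$ and $k!!=\prod_j k_j!$, so the leading contributions factorize digit-by-digit as
\[\frac{\pi^{s_p(n)}}{\prod_{i<m}n_i!}\prod_{j\geq 0}\frac{1}{m_j!}\sum_{k_j=0}^{m_j}(-1)^{k_j}\binom{m_j}{k_j}=\frac{\pi^{s_p(n)}}{\prod_{i<m}n_i!}\prod_{j\geq 0}\frac{(1-1)^{m_j}}{m_j!},\]
which vanishes because $n\geq q$ forces some $m_j\geq 1$. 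Combined with the error bounds inherited from (i), this yields $v(\lambda_n^{(m)})\geq s_p(n)+p-1$. The main obstacle is bookkeeping: making the Wilson-type factorization of $n!$ precise enough to give the stated error $\pi^{s_p(n)+p-1}$, and then organizing the sum in (ii) so that the borrow-free contribution visibly factors over base-$p$ digit positions. Once that is set up, the final cancellation is just $(1-1)^{m_j}=0$.
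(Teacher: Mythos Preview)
Your approach is essentially the paper's own: expand $\exp(\pi X)\exp(-\pi X^q)$, use Anton's congruence for $n!$ to get part~(i), then for part~(ii) separate the sum according to whether the base-$p$ subtraction $n_H-k$ produces borrows, and show the borrow-free piece vanishes by a binomial identity. The paper's write-up and yours differ only in notation.

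There is one genuine gap, however. Your factorization
\[
(-1)^k=\prod_{j\ge 0}(-1)^{k_j}
\]
uses $(-1)^{p^j}=-1$, which holds only for odd $p$. When $p=2$ one has $(-1)^k=(-1)^{k_0}$, and the digit-by-digit product you wrote is not the correct sign pattern; in particular the factor for $j\ge 1$ becomes $\sum_{k_j}\binom{m_j}{k_j}=2^{m_j}$ rather than $(1-1)^{m_j}$. The paper treats $p=2$ in a separate sentence: drop the signs, observe each digit-sum equals $2^{m_j}$, and note that this already contributes at least one extra factor of $p=-\pi^{p-1}$ since $n\ge q$ forces $n_H\ge 1$. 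You should add this case distinction; as written, your argument for (ii) is incomplete at $p=2$.
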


\begin{proof}
Recall that $\theta_m(X)=\exp(\pi X-\pi X^q)$. From the well known expansion $\exp X=\sum_{n\geq 0} \frac{X^n}{n!}$, we get
$$\lambda^{(m)}_n=\sum_{r,s,r+qs=n} (-1)^s \frac{\pi^{r+s}}{r!s!}.$$

Assume $0\leq n\leq q-1$; then we get $\lambda^{(m)}_n=\frac{\pi^{n}}{n!}$. From a result of Anton, we have the congruence 
$$n!\equiv (-p)^a n!! \mod p^{a+1},\quad a=\frac{n-s_p(n)}{p-1},$$
which gives part i/ of the lemma (recall that $\pi^{p-1}=-p$).

Assume $n\geq q$, and write $n=n_0+pn_1+\cdots+p^tn_t$, with $t\geq m$. First observe that, from above,
$$\frac{\pi^{r+s}}{r!s!}\equiv \frac{\pi^{s_p(r)+s_p(s)}}{r!! s!!} \mod \pi^{s_p(r)+s_p(s)+p-1}$$
Since $n=r+qs$, and $s_p(q)=1$, we remark that $s_p(n)\leq s_p(r)+s_p(s)$, and $s_p(n)\equiv s_p(r)+s_p(s) \mod p-1$. We have $s_p(n)= s_p(r)+s_p(s)$ if and only if $0\leq s_i \leq n_{m+i}$ for any $0\leq i\leq t-m$. Thus we get
$$\lambda^{(m)}_n\equiv \left(\sum_{s_0=0}^{n_m}\cdots \sum_{s_{t-m}=0}^{n_t} (-1)^s\frac{1}{r!!s!!}\right) \pi^{s_p(n)}\mod \pi^{s_p(n)+p-1}.$$

Assume $p$ is odd. Since $(-1)^s=(-1)^{s_0+\cdots+s_{t-m}}$, and $r!!=n_0!!\dots n_{m-1}!!(n_m-s_0)!!\cdots (n_t-s_{m-t})!!$, one can rewrite the multiple sum
$$\frac{1}{n!!}\sum_{s_0=0}^{n_m}\frac{(-1)^{s_0}n_m!}{(n_m-s_0)!s_0!}\cdots \sum_{s_{t-m}=0}^{n_t} \frac{(-1)^{s_{t-m}}n_t!}{(n_t-s_{t-m})!s_{t-m}!}.$$
Since each sum is an alternate sum of binomial coefficients, it is zero, and we get the result.

\medskip

When $p=2$, we just have to remove the signs; now the sums of binomial coefficients are powers of $2$, and we get the result.
\end{proof}

\subsection{Modular equations and {\it p}-density} 

Here we recall and introduce some notations that we shall use in the following. The reader interested in more details and the proofs can refer to \cite{bl}.

\begin{definition}
Let $D$ be as above, and $m$ denote a positive integer.

\begin{itemize}
	\item[\it i/] We define $E_{D,p}(n)$ as the set of $\# D$-tuples $U=(u_d)_{d\in D}\in \{0,\dots,p^n-1\}^{\# D}$ such that
$$\sum_D u_d d \equiv 0~[p^n-1],~\sum_D u_d d>0.$$
For any $U \in E_{D,p}(n)$, we define the {\rm $p$-weight of $U$} as the integer $s_p(U)= \sum_{d\in D} s_p(u_d)$, and the {\rm length of $U$} as $\ell(U)=n$.
\item[\it ii/] Define $s_{D,p}(n):=\min_{U\in E_{D,p}(n)} s_p(U)$.
\item[\it iii/] Let $\delta_n$ be the {\rm shift}, from the set $\{0,\dots,p^n-1\}$ to itself, which sends any integer $0\leq k\leq p^n-2$ to the residue of $pk$ modulo $p^n-1$, and $p^n-1$ to itself. 
\item[\it iv/] We define a map
$$\begin{array}{ccccc}
\varphi_n & : & E_{D,p}(n) & \rightarrow & \N_+ \\
        &   & U      & \mapsto & \frac{1}{p^n-1} \sum_{d\in D} du_d\\
        \end{array}$$
\item[\it v/] To each $U \in E_{D,p}(n)$, we associate a map $\varphi_U$ from $\{0,\dots,n-1\}$ to $\N_+$ defined by $$\varphi_U(k):=\varphi_n(\delta_n^k(U)),$$
and we denote its image by $\Phi(U)$; we say $U$ is {\rm irreducible} when $\#\Phi(U)=n$, i.e. when $\varphi_U$ is an injection.
\end{itemize}
\end{definition}  

Moreno et al. \cite{mm} introduce the sets $E_{D,p}(n)$ in order to give a lower bound for the valuation of an exponential sum associated to a polynomial with its exponents in $D$ and coefficients in $\F_{p^n}$. In \cite{bl}, we proved the following (see Proposition 1.1 and Lemma 1.5) 

\begin{proposition}
\label{princ}
The set $\left\{\frac{s_{D,p}(n)}{n}\right\}_{n\geq 1}$ has a minimum; this minimum is attained for at least one $n\leq d_0-1$.
\end{proposition}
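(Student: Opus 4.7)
The plan is to combine a positive lower bound on the ratio $s_p(U)/\ell(U)$ (derived from a carry identity for the orbit of $U$ under $\delta_n$) with a reduction to irreducible $U$ and a pigeonhole estimate on $\Phi(U)$.

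\textbf{Lower bound.} Writing $u_d = \sum_{i=0}^{n-1} u_{d,i} p^i$ in base $p$, a direct computation shows that the shift acts by $\delta_n(u_d) = p u_d - u_{d, n-1}(p^n - 1)$. Setting $a_k := \varphi_n(\delta_n^k U)$ and $c_k := \sum_{d \in D} d \cdot u_{d, n-1-k \bmod n}$, this becomes the carry relation $a_{k+1} = p a_k - c_k$; summing over $k = 0, \ldots, n-1$ and using periodicity yields the key identity
\[
(p-1) \sum_{k=0}^{n-1} a_k \;=\; \sum_{d \in D} d \cdot s_p(u_d).
\]
Since each $a_k \geq 1$ (because $\delta_n^k U$ remains in $E_{D,p}(n)$) and $\sum_{d} d \cdot s_p(u_d) \leq d_0 \cdot s_p(U)$, one obtains the uniform lower bound $s_p(U)/n \geq (p-1)/d_0 > 0$.

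\textbf{Reduction to irreducibles and existence of the minimum.} When $U$ is not irreducible, I would produce $U^\star \in E_{D,p}(n^\star)$ with $n^\star < n$ and $s_p(U^\star)/n^\star \leq s_p(U)/n$: if the orbit of $U$ under $\delta_n$ has period $n^\star < n$, each $u_d$ is $n^\star$-periodic in its base-$p$ digits and the fundamental word gives $U^\star$ with equal ratio (reversing the natural $k$-fold extension $u_d \mapsto u_d(p^{kn}-1)/(p^n-1)$); in the remaining case where $\varphi_U$ is non-injective on a full-length orbit, a pairing argument on coinciding $\varphi$-values produces a strictly smaller $U^\star$. For an irreducible $U$, the $a_k$ are $n$ distinct positive integers, so $\sum_k a_k \geq n(n+1)/2$ and the identity above gives
\[
\frac{s_p(U)}{n} \;\geq\; \frac{(p-1)(n+1)}{2 d_0},
\]
a lower bound growing with $n$. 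Combined with the uniform upper bound $s_p(U)/n \leq p-1$ (attained by the trivial $U$ with $u_{d_0} = p^n - 1$), only finitely many $n$ can attain the overall minimum, so the minimum is realized.

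\textbf{Sharpening to $n \leq d_0 - 1$; main obstacle.} The estimate above only forces the minimizing $n$ to be at most roughly $2 d_0/(p-1) - 1$, so the tight bound $n \leq d_0 - 1$ is the main obstacle. The strategy is to show that any irreducible minimizer $U$ satisfies $\Phi(U) \subseteq \{1, \ldots, d_0 - 1\}$: if some $a_k \geq d_0$, then (using $u_{d_0} d_0 \leq d_0(p^n - 1)$ together with the carry relation) a careful decomposition of $U$ produces $U^\star$ with strictly smaller ratio, contradicting minimality. Since $\varphi_U$ is injective on irreducibles, this containment then forces $n = \#\Phi(U) \leq d_0 - 1$, completing the proof. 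The delicate combinatorial manipulation of digits required to implement this decomposition is the technical heart of the argument.
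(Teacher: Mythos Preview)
The paper does not prove this proposition in the text; it is quoted from the companion paper \cite{bl} (Proposition~1.1 and Lemma~1.5 there), so there is no in-paper argument to compare against directly. That said, the machinery you assemble matches what is visible elsewhere here. Your carry identity $(p-1)\sum_{k}a_k=\sum_{d\in D} d\,s_p(u_d)$ is correct, and your reduction to irreducibles---splitting $u_d=p^{n-t}w_d+v_d$ at a repeated $\varphi$-value to produce shorter $V,W$ whose densities convex-combine to $\delta(U)$---is exactly the mechanism used in the proof of Lemma~\ref{suppmin}. With this, your proof that the infimum is actually a minimum is complete and in line with the intended framework.

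The genuine gap is the one you already flag: the sharp bound $n\le d_0-1$. Your distinct-values inequality gives only $n+1\le 2d_0\,\delta_{D,p}$, which need not imply $n\le d_0-1$. Your proposed repair---showing $\Phi(U)\subseteq\{1,\dots,d_0-1\}$ for a minimal irreducible $U$ via ``a careful decomposition producing $U^\star$ with strictly smaller ratio''---is not actually carried out, and as stated is too vague to assess. Two remarks. First, the containment $\Phi(U)\subseteq\{1,\dots,d_0-1\}$ fails for arbitrary $U$ (e.g.\ $D=\{1,2,3\}$, $p=5$, $n=1$, $U=(4,4,4)$ gives $\varphi(U)=6$), so any argument must genuinely exploit minimality or irreducibility, not just the recursion. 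Second, the paper itself attributes the finiteness of $\Ima\varphi$ to \cite[Lemma~1.1~iii/]{bl} and the bound $n\le d_0-1$ to \cite[Lemma~1.5]{bl}; your sketch correctly identifies the target statement but does not supply the combinatorics, so the proposal remains incomplete on this final step.
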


This result allows the definition of $p$-density, that we shall use in the next sections

\begin{definition}
\begin{itemize}
	\item[\it i/] Let $D,p$ be as above. The {\rm $p$-density of the set $D$} is the rational number
$$\delta_{D,p}:= \frac{1}{p-1}\min_{n\geq 1}\left\{\frac{s_{D,p}(n)}{n}\right\}.$$
\item[\it ii/] The density of an element $U\in E_{D,p}(n)$ is $\delta(U):=\frac{s_p(U)}{(p-1)n}$. The element $U$ is {\rm minimal} when $\delta(U)=\delta_{D,p}$.
\end{itemize}
\end{definition}

We shall need the following lemma

\begin{lemma}
\label{qmin}
Let $(u_d)_D$ be nonnegative integers such that $\sum_D u_d d \equiv 0~[p^n-1],~\sum_D u_d d>0$. Then we have $\sum_D s_p(u_d)\geq s_{D,p}(n)$.
\end{lemma}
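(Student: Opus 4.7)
The plan is to produce from $(u_d)_D$ an honest element of $E_{D,p}(n)$ whose total $p$-weight is no larger; the inequality will then fall out of the very definition of $s_{D,p}(n)$. Since the $u_d$ may be arbitrary nonnegative integers, the task reduces to moving each one into the range $\{0,\dots,p^n-1\}$ without increasing its base-$p$ digit sum and without destroying positivity.

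The key tool I would introduce is a ``folding'' map $T$ on nonnegative integers: for $v=c+c'p^n$ with $0\le c<p^n$ and $c'\ge 0$, set $T(v):=c+c'$. Three properties are immediate. (i) Since $p^n\equiv 1\pmod{p^n-1}$, one has $T(v)\equiv v\pmod{p^n-1}$. (ii) Since $s_p(v)=s_p(c)+s_p(c')$, and the base-$p$ digit sum is subadditive (each carry trades $p$ units for $1$), one has $s_p(T(v))=s_p(c+c')\le s_p(v)$. (iii) $T(v)=0$ forces $c=c'=0$, hence $v=0$; and $T(v)<v$ whenever $v\ge p^n$.

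I would then iterate $T$ on each $u_d$ separately; since the value strictly decreases as long as it stays $\ge p^n$, after finitely many steps I reach $u'_d\in\{0,\dots,p^n-1\}$ with $u'_d\equiv u_d\pmod{p^n-1}$, $s_p(u'_d)\le s_p(u_d)$, and $u'_d>0$ iff $u_d>0$. Summing gives $\sum_D u'_d\,d\equiv 0\pmod{p^n-1}$; and the hypothesis $\sum_D u_d d>0$ forces some $u_d$, hence the corresponding $u'_d$, to be positive, so $\sum_D u'_d\,d>0$. Thus $(u'_d)_D\in E_{D,p}(n)$, and
$$\sum_D s_p(u_d)\;\ge\;\sum_D s_p(u'_d)\;\ge\;s_{D,p}(n).$$

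I do not expect a real obstacle: the only nontrivial ingredient is digit-sum subadditivity, which is classical, and the rest is fold-and-sum bookkeeping. The one point worth flagging is the ``$u'_d>0$ iff $u_d>0$'' bit, which must be checked in order to ensure $\sum_D u'_d d>0$ (i.e.\ to avoid landing at the excluded tuple $U\equiv 0$); property (iii) above takes care of this.
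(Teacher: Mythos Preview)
Your proof is correct and is essentially the paper's own argument: the ``folding'' map $T(v)=c+c'$ is exactly the step $u\mapsto u_1+v_1$ that the paper isolates in the immediately following Lemma (qred), and the rest is the same reduce-then-invoke-the-definition bookkeeping. If anything, you are slightly more careful than the paper in verifying that positivity survives the iteration (your property~(iii)), which the paper leaves implicit.
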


\begin{proof}
Let $u$ be a positive integer, and $\overline{u}\in \{1,\cdots,p^n-1\}$ be the integer defined by $u\equiv \overline{u}$ mod $p^n-1$. Then we have $\sum_D \overline{u}_d d \equiv 0~[p^n-1],~\sum_D \overline{u}_d d>0$ from the construction. Now we have $\sum_D s_p(\overline{u}_d)\geq s_{D,p}(n)$ from the definition of this last invariant, and the result comes from the following lemma.
\end{proof}

\begin{lemma}
\label{qred}
Notations being as in the proof above, we have $s_p(u)\geq s_p(\overline{u})$. 
\end{lemma}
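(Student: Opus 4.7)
The plan is to prove the inequality by iteratively ``folding'' the base-$p$ expansion of $u$ modulo $p^n-1$, relying on two elementary properties of the $p$-adic digit sum $s_p$: subadditivity, $s_p(a+b)\leq s_p(a)+s_p(b)$ (a consequence of Kummer's formula for the $p$-adic valuation of a binomial coefficient), and shift invariance, $s_p(p^j a)=s_p(a)$.

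First I would write $u$ in base $p^n$ as $u=c_0+c_1p^n+\cdots+c_Lp^{Ln}$ with $0\leq c_i\leq p^n-1$. Because the base-$p$ expansion of $u$ is just the concatenation of those of the $c_i$ (padded to length $n$ with leading zeros, which contribute nothing to the digit sum), one has the exact equality $s_p(u)=\sum_{i=0}^L s_p(c_i)$.

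Since $p^n\equiv 1\pmod{p^n-1}$, setting $v:=\sum_{i=0}^L c_i$ yields $v\equiv u\pmod{p^n-1}$, and iterated subadditivity gives $s_p(v)\leq\sum_i s_p(c_i)=s_p(u)$. If $v\geq p^n$ I repeat the procedure with $v$ in place of $u$; one has $v<u$ as soon as $u\geq p^n$ (some $c_i$ with $i\geq 1$ is nonzero, and $c_i p^{in}>c_i$), so the iteration produces a strictly decreasing sequence of positive integers, positivity being preserved because $u>0$ forces some $c_i>0$, hence $v\geq 1$. When the process stops, the terminal value lies in $\{1,\ldots,p^n-1\}$, is congruent to $u$ modulo $p^n-1$, and therefore coincides with $\overline{u}$; chaining the inequalities gives $s_p(\overline{u})\leq s_p(u)$.

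The only potentially delicate point is to ensure that the iteration really terminates in the range $\{1,\ldots,p^n-1\}$ used to define $\overline{u}$, and in particular that it does not reach $0$ (which would correspond to the representative $p^n-1$ rather than to a strictly smaller one, and might break the identification with $\overline{u}$). The preservation of positivity along the iteration, noted above, handles this, so the whole argument reduces to the two standard facts on $s_p$ together with careful bookkeeping of representatives modulo $p^n-1$.
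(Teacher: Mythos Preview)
Your proof is correct and follows essentially the same route as the paper. The paper writes one Euclidean division $u=p^{n}u_1+v_1$, notes $s_p(u)=s_p(u_1)+s_p(v_1)\geq s_p(u_1+v_1)$, and iterates; you instead expand $u$ fully in base $p^{n}$ and collapse all the digits at once, which is the same idea applied in bulk rather than one step at a time, and you are in fact more explicit than the paper about termination and about avoiding the value $0$.
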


\begin{proof}
Write the euclidean division of $u$ by $p^n$, $u=p^nu_1+v_1$. Then we have $s_p(u)=s_p(u_1)+s_p(v_1)\geq s_p(u_1+v_1)$. Replacing $u$ by $u_1+v_1$, and repeating the same process, we finally get $\overline{u}$ and the result.
\end{proof}

Finally, we introduce some notations and sets for further use

\begin{definition} 
We denote by $MI_{D,p}(n)$ the set of minimal irreducible elements in $E_{D,p}(n)$, and we set
$$MI_{D,p}:=\coprod_n MI_{D,p}(n).$$ 
Moreover, if $MI_{D,p}=\{U_1,\dots,U_r\}$, we set
$$\Sigma_{D,p}=\cup_{i=1}^r \Phi(U_i);~N_{D,p}:=\#\Sigma_{D,p}.$$
Since $\delta_n$ preserves both minimality and irreducibility for the elements in $E_{D,p}(n)$, it acts on $MI_{D,p}(n)$. We define $\Delta$ as the bijection over $MI_{D,p}$ defined from the $\delta_n$, and we denote by $MI_{D,p}^\ast$ the set of its orbits.
\end{definition}

\begin{remark}
An element $U\in \N^{\# D}$ can belong simultaneously to various sets $E_{D,p}(n)$ when $n$ varies, but a minimal element belongs to exactly one set $MI_{D,p}(n)$. For this reason, the disjoint union makes sense, so as the definition of $\Delta$.
\end{remark}

\begin{remark}
Note that $\varphi$ as a finite image from \cite[Lemma 1.1 iii/]{bl}. If $N_D$ denotes its cardinality, we get that an element in $E_{D,p}(n)$ can be irreducible only if $n\leq N_D$. Thus the set $MI_{D,p}$ is finite, so as $MI_{D,p}^\ast$.
\end{remark}

\begin{remark}
\label{minelt}
From the definition, for $U\in E_{D,p}(n)$ we have $\varphi_{\delta_n(U)}(i)=\varphi_U(i+1)$, that is $\varphi_{\delta_n(U)}$ is obtained by $\varphi_U$ by a cyclic permutation of its image. Thus in each orbit of the action of $\delta_n$ on $MI_{D,p}(n)$ there exists a unique $U$ (since $U$ is irreducible, $\varphi_U$ is injective) such that $\varphi_U(0)=\min \Ima \varphi_U$. We will choose this element to represent the corresponding orbit in the following.
\end{remark}

We end with a lemma that we shall use further

\begin{lemma} 
\label{suppmin}
Let $U\in E_{D,p}(n)$ be minimal. Then $\Phi(U)\subset \Sigma_{D,p}$.
\end{lemma}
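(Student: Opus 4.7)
The plan is strong induction on the length $n$. For the base case $n=1$, the map $\varphi_U$ takes a single value, hence is trivially injective, so every element of $E_{D,p}(1)$ is irreducible; a minimal one thus lies in $MI_{D,p}(1) \subset MI_{D,p}$, giving $\Phi(U) \subset \Sigma_{D,p}$. For the inductive step, take $U \in E_{D,p}(n)$ minimal with $n \geq 2$. If $U$ is irreducible, then $U \in MI_{D,p}(n)$ and we are done. Otherwise $\varphi_U$ is not injective, and since $\Phi(U) = \Phi(\delta_n^i U)$ as sets and $\delta_n$ preserves the density (cyclic digit rotation fixes each $s_p(u_d)$), after replacing $U$ by a suitable $\delta_n^i U$ we may assume $\varphi_U(0) = \varphi_U(j)$ for some $0 < j < n$, without changing $\Phi(U)$.

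The central construction is to split $U$ into two smaller minimal elements. Write $u_d = A_d + p^{n-j}B_d$ with $A_d \in \{0,\ldots,p^{n-j}-1\}$ (the low $n-j$ $p$-adic digits) and $B_d \in \{0,\ldots,p^j-1\}$ (the high $j$ digits). Since $\delta_n^j u_d = B_d + p^j A_d$, the equality $\varphi_U(0) = \varphi_U(j)$ becomes $(p^j-1)\sum_d dA_d = (p^{n-j}-1)\sum_d dB_d$, and combined with $\sum_d du_d = M(p^n-1)$ for $M := \varphi_n(U)$, straightforward linear algebra forces $\sum_d dA_d = M(p^{n-j}-1)$ and $\sum_d dB_d = M(p^j-1)$. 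Hence $A := (A_d) \in E_{D,p}(n-j)$ and $B := (B_d) \in E_{D,p}(j)$, both with common $\varphi$-value $M$. Because the digit expansions of $u_d$ split disjointly, $s_p(u_d) = s_p(A_d) + s_p(B_d)$, so $\delta(U) = [(n-j)\delta(A) + j\delta(B)]/n$ is a convex combination of $\delta(A), \delta(B) \geq \delta_{D,p}$ equal to $\delta_{D,p}$, which forces both $A$ and $B$ to be minimal.

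The main obstacle is the inclusion $\Phi(U) \subset \Phi(A) \cup \Phi(B)$. I plan to prove the explicit pointwise identities $\varphi_U(k) = \varphi_B(k)$ for $0 \leq k < j$ and $\varphi_U(k) = \varphi_A(k-j)$ for $j \leq k < n$. This is a direct but delicate bookkeeping: the cyclic shift $\delta_n^k$ rearranges the digits of $A_d$ and $B_d$ inside $u_d$ in a way that almost decouples into cyclic shifts of $A_d$ and $B_d$ within their own cycles, and the relations $\sum_d dA_d = M(p^{n-j}-1)$ and $\sum_d dB_d = M(p^j-1)$ provide exactly the cancellations needed to make $\sum_d d\,\delta_n^k u_d$ proportional to $\sum_d d\,\delta_j^k B_d$ (for $k < j$) or to $\sum_d d\,\delta_{n-j}^{k-j} A_d$ (for $k \geq j$), with the correct normalization $(p^n-1)$ versus $(p^j-1)$ or $(p^{n-j}-1)$. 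Once this identity is established, the inductive hypothesis applied to $A$ and $B$, which have lengths $n-j, j < n$, gives $\Phi(A), \Phi(B) \subset \Sigma_{D,p}$, whence $\Phi(U) \subset \Sigma_{D,p}$, completing the induction.
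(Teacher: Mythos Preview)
Your proposal is correct and follows essentially the same approach as the paper: split a non-irreducible minimal $U$ at a repeated value of $\varphi_U$ into two shorter minimal elements and recurse (the paper phrases the recursion as an iterated decomposition rather than explicit induction, but this is the same argument). The only difference is that the paper imports both the splitting equations $\sum_d dA_d = M(p^{n-j}-1)$, $\sum_d dB_d = M(p^j-1)$ and the identity $\Phi(U) = \Phi(A)\cup\Phi(B)$ from an external reference (\cite[Lemma 1.2 ii/]{bl}), whereas you rederive them by direct digit manipulation; your pointwise identities $\varphi_U(k)=\varphi_B(k)$ for $0\le k<j$ and $\varphi_U(k)=\varphi_A(k-j)$ for $j\le k<n$ are indeed correct and yield that identity.
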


\begin{proof}
If $U$ is irreducible, this is clear from the definition of $\Sigma_{D,p}$. Else we can find integers $t_1< t_2$ in $\{0,\dots,n-1\}$ such that $\varphi(\delta_n^{t_1}(U))=\varphi(\delta_n^{t_2}(U))$. If we consider the element $\delta_n^{t_1}(U)$ instead of $U$ (they have the same $p$-weight), we obtain some $0<t\leq n-1$ such that $\varphi(U)=\varphi(\delta_n^{t}(U))$.

For each $d$, let $u_d=p^{n-t}w_d+v_d$ be the result of the euclidean division of $u_d$ by $p^{n-t}$. Set $V=(v_d)$, and $W=(w_d)$. From \cite[Lemma 1.2 ii/]{bl} and the definition of $t$, we have
$$\sum_{D} dv_d=(p^{n-t}-1)\varphi(U)~;~\sum_{D} dw_d=(p^{t}-1)\varphi(U).$$
Thus $V\in E_{D,p}(n-t)$ and $W\in E_{D,p}(t)$. From the definition of $p$-density, both $\delta(V)$ and $\delta(W)$ are greater than or equal to $\delta_{D,p}$. But for each $d$ we have $s_p(u_d)=s_p(v_d)+s_p(w_d)$, and $s_p(U)=s_p(V)+s_p(W)$. Since $U$ is minimal, we have 
$$\delta_{D,p}=\frac{s_p(V)+s_p(W)}{n(p-1)}=\left(1-\frac{t}{n}\right)\delta(V)+\frac{t}{n}\delta(W)\geq \delta_{D,p}.$$ 
Thus both $V$ and $W$ are minimal, and again from \cite[Lemma 1.2 ii/]{bl}, we have $\Phi(U)=\Phi(V)\cup\Phi(W)$. If both $V$ and $W$ are irreducible, we are done; else we apply the same process to $V$ or $W$, and we end with minimal irreducible elements $U_{i_1},\cdots,U_{i_k}$ in $E_{D,p}(n_1),\cdots , E_{D,p}(n_k)$ with $\Phi(U)=\cup_j \Phi(U_{i_j})$.
\end{proof}

\section{The first vertex of the generic Newton polygon.}

Recall that $A_m=(f^{(m)}_{qi-j})_{i,j\geq 1}$ ({\it resp.} $A_1=(f^{(1)}_{pi-j})_{i,j\geq 1}$) is the matrix of $\alpha_m$ ({\it resp.} $\alpha_1$) with respect to the basis $\B$, and their coefficients are in $\Q_p(\zeta_p,\zeta_{q-1})$. Moreover recall the relation

$$A_m=A_1^{\tau^{m-1}}\cdots A_1.$$

Set $\det(I-TA_m):=1+\sum_{n\geq 1} \ell^{(m)}_n T^n$, and $\det(I-TA_1):=1+\sum_{n\geq 1} \ell^{(1)}_n T^n$.

\medskip

In order to simplify notations, we shall set $\delta:=\delta_{D,p}$, $\Sigma:=\Sigma_{D,p}$ and $N:=N_{D,p}$ all along this section. Moreover we set $\Sigma:=\{v_1,\cdots,v_N\}$.

\medskip

Our aim is to get a congruence for the coefficient $\ell_N^{(m)}$ under a technical hypothesis, and to deduce from this congruence the first vertex of the generic Newton polygon, and the corresponding Hasse polynomial.

We begin by recalling some facts about the coefficients $\ell_n^{(m)}$: we shall decompose their principal parts as sums of terms that we link with to the minimal elements defined in the preceding section.

Let $\II_n$ denote the set of injections from $\{0,\cdots,n-1\}$ to $\N_+$, and $S_n$ the symmetric group over $n$ elements. 

Let $F$ be a non empty subset of $\N_+$. We denote by $A_m^F$ the matrix $(f^{(m)}_{qi-j})_{i,j\in F}$. We have the following expression for $\ell_n^{(m)}$ in terms of the determinants of the matrices $A_m^F$
$$\ell_n^{(m)}=\sum_{F\subset \ma{N}_+,~\# F=n} \det A_m^F.$$
From the definition of the determinant, we have, for $F=\{u_0,\dots,u_{n-1}\}$
$$\det A_m^F=\sum_{\sigma\in S_n} M_{F,\sigma},\quad M_{F,\sigma}:=\sgn(\sigma)\prod_{i=0}^{n-1} f^{(m)}_{qu_i-u_{\sigma(i)}},$$

We will need another, less classical expression for the determinant in the following. Let us give a definition

\begin{definition}
\label{isigma}
Let $F$ be as above, with cardinality $n$. Define $\II(F):=\coprod_{k=1}^n \II_k(F)$, where $\II_k(F)$ is the set of injections from $\{0,\cdots,k-1\}$ to $F$,. Let $\AA(F)$ consist of the parts $\Theta:=\{\theta_1,\dots,\theta_{k_\Theta}\} \subset \II(F)$ such that 
\begin{itemize}
	\item[i/] for each $i$, $\theta_i(0)=\min \Ima \theta_i$; 
	\item[ii/] the $\Ima \theta_i$, $1\leq i\leq k_\Theta$ form a partition of $F$.
\end{itemize}
\end{definition}

From this new set, we have the following expression for the determinant $\det A_m^F$

\begin{lemma}
\label{decdetcyc}
For $\theta\in \II_n$, set $M_\theta^{(m)}:=(-1)^{n-1}\prod_{i=0}^{n-1} f^{(m)}_{q\theta(i)-\theta(i+1)}$. Notations being as in the definition above, we have

$$\det A_m^F = \sum_{\Theta \in \AA(F)} \prod_{i=1}^{k_\Theta} M_{\theta_i}^{(m)}.$$
\end{lemma}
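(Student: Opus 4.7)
The plan is to rewrite the standard Leibniz expansion
$$\det A_m^F = \sum_{\sigma \in S_n} \sgn(\sigma) \prod_{i=0}^{n-1} f^{(m)}_{qu_i - u_{\sigma(i)}}$$
by grouping terms according to the cycle decomposition of each permutation.

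First I would exhibit a bijection between $S_n$, viewed as permutations of $F$ via $u_i \mapsto u_{\sigma(i)}$, and the set $\AA(F)$. A permutation $\sigma$ partitions $F$ into its orbits, and each orbit $O$ of length $k$ is turned into an injection $\theta \in \II_k(F)$ by starting at $\min O$ and following the cycle: $\theta(i) := \sigma^i(\min O)$. The normalization condition $\theta(0) = \min \Ima \theta$ in Definition \ref{isigma}(i) selects a single representative from the $k$ cyclic rotations of each cycle, so the assignment $\sigma \mapsto \Theta = \{\theta_1, \dots, \theta_{k_\Theta}\}$ is a genuine bijection onto $\AA(F)$; the inverse sends $\Theta$ to the permutation defined on each $\Ima \theta_j$ by $\sigma(\theta_j(i)) := \theta_j((i+1) \bmod k_j)$.

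Next I would match the two sides term-by-term under this bijection. The product $\prod_i f^{(m)}_{qu_i - u_{\sigma(i)}}$ factors over the orbits of $\sigma$, and on the $j$-th orbit it becomes the cyclic product $\prod_{i=0}^{k_j - 1} f^{(m)}_{q\theta_j(i) - \theta_j(i+1)}$ with the convention $\theta_j(k_j) := \theta_j(0)$. For the sign, a $k$-cycle has signature $(-1)^{k-1}$, so $\sgn(\sigma) = \prod_{j=1}^{k_\Theta} (-1)^{k_j - 1}$, which distributes exactly as the $(-1)^{k_j-1}$ factor built into each $M_{\theta_j}^{(m)}$. Assembling these gives $\sgn(\sigma) \prod_i f^{(m)}_{qu_i - u_{\sigma(i)}} = \prod_{j=1}^{k_\Theta} M_{\theta_j}^{(m)}$, and summing over $\sigma$ yields the claim.

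The argument is pure combinatorial bookkeeping and I do not anticipate a genuine obstacle. The only step requiring a little care is verifying that the rule $\theta(0) = \min \Ima \theta$ really kills the $k_j$-fold ambiguity inherent in writing a cycle as an injection from $\{0,\dots,k_j-1\}$, so that no overcounting occurs; this is immediate once the minimum-based normalization is in place.
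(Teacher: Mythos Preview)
Your proposal is correct and follows essentially the same approach as the paper: both set up the bijection between $S_n$ and $\AA(F)$ via cycle decomposition with the minimum-element normalization, then factor the Leibniz term and its sign over the cycles. The only cosmetic difference is that the paper decomposes $\sigma$ in $\{0,\dots,n-1\}$ and transports the resulting injections to $F$ via $i\mapsto u_i$, whereas you work directly with the orbits in $F$; your treatment of the sign is in fact slightly more explicit than the paper's.
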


\begin{proof}
First recall that any $\sigma \in S_n$ can be written in a unique way as $\sigma=\gamma_1\cdots \gamma_{k_\sigma}$ where the $\gamma_i$ are cycles with pairwise disjoint supports covering $\{0,\cdots,n-1\}$. Such a cycle $\gamma_i$ of length $n_i$ can be represented in a unique way as an injection $\eta_i$ from $\{0,\cdots,n_i-1\}$ to $\{0,\cdots,n-1\}$ such that $\eta_i(0)=\min \Ima \eta_i$ in the following way: $\gamma_i=(\eta_i(0)\cdots \eta_i(n_i-1))$. Thus the map $\sigma \mapsto \{\eta_1,\cdots,\eta_{k_\sigma}\}$ defines a bijection between the sets $S_n$ and $\AA(\{0,\cdots,n-1\})$.

Let $g$ be the bijection from $\{0,\cdots,n-1\}$ to $F$ sending $i$ to $u_i$; from what we have just said, the map $\sigma \mapsto \{g\circ \eta_1,\cdots,g \circ \eta_{k_\sigma}\}$ is a bijection from $S_n$ to $\AA(F)$. Now for any $\sigma\in S_n$ with image $\{\theta_1,\cdots,\theta_{k_\sigma}\}$ in $\AA(F)$, we have $M_{F,\sigma}=\prod_{i=1}^{k_\sigma} M_{\theta_i}^{(m)}$. This is the desired result.
\end{proof}

Before we give congruences for the cyclic minors, recall that we have the following expansion for the coefficients of the series $F_m$
\begin{equation}
\label{coeffm}
f_n^{(m)}=\sum_{\sum du_d=n} \prod_D \lambda_{u_d}^{(m)} a_d^{u_d}
\end{equation}

We begin with a lemma

\begin{lemma}
\label{supp1}
Notations being as above, we have
\begin{itemize}
	\item[i/] $M_\theta^{(m)}\equiv 0 \mod \pi^{mn(p-1)\delta}$, and
	\item[ii/] if $\Ima \theta \nsubseteq \Sigma$, then $M_\theta^{(m)}\equiv 0 \mod \pi^{mn(p-1)\delta+1}$
\end{itemize}
\end{lemma}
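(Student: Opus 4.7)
My plan is to expand each factor of the product $\prod_{i=0}^{n-1} f^{(m)}_{q\theta(i)-\theta(i+1)}$ via the formula (\ref{coeffm}), writing
$$M_\theta^{(m)} = (-1)^{n-1}\sum_{(u_d^{(i)})} \prod_{i=0}^{n-1}\prod_{d \in D} \lambda^{(m)}_{u_d^{(i)}}\, a_d^{u_d^{(i)}},$$
where the sum runs over families of nonnegative integers with $\sum_{d \in D} d\, u_d^{(i)} = q\theta(i) - \theta(i+1)$ for each $i$. Since the $a_d$ are Teichm\"uller units, Lemma \ref{coeffssplit} bounds the valuation of each monomial below by $\sum_{i,d} s_p(u_d^{(i)})$, and this lower bound is attained only when $u_d^{(i)} \leq q-1$ for every $i,d$.

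The key combinatorial step is to recombine the $u_d^{(i)}$ into a single admissible tuple. Set
$$U_d := \sum_{i=0}^{n-1} q^{n-1-i}\, u_d^{(i)}.$$
A telescoping calculation gives
$$\sum_{d \in D} d\, U_d = \sum_{i=0}^{n-1} q^{n-1-i}\bigl(q\theta(i)-\theta(i+1)\bigr) = (q^n-1)\,\theta(0) > 0,$$
so that $\sum_D d\,U_d$ is a positive multiple of $p^{nm}-1$. Lemma \ref{qmin} then supplies $\sum_d s_p(U_d) \geq s_{D,p}(nm) \geq nm(p-1)\delta$. Conversely, sub-additivity of the base-$p$ digit sum applied to $U_d = \sum_i p^{m(n-1-i)}\, u_d^{(i)}$ gives $s_p(U_d) \leq \sum_i s_p(u_d^{(i)})$. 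Chaining these estimates yields $\sum_{i,d} s_p(u_d^{(i)}) \geq nm(p-1)\delta$ for every monomial, which proves (i).

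For (ii), I suppose for contradiction that $v(M_\theta^{(m)}) = nm(p-1)\delta$. Then some monomial must attain this valuation exactly, and equality must propagate through the whole chain. This forces (a) $u_d^{(i)} \leq q-1$ for all $i,d$, so that $U_d \leq q^n-1$ and $(U_d) \in E_{D,p}(nm)$, and (b) $\sum_d s_p(U_d) = s_{D,p}(nm) = nm(p-1)\delta$, so $(U_d)$ is minimal. By Lemma \ref{suppmin}, $\Phi(U) \subset \Sigma$; since $\varphi_{nm}(U) = \theta(0)$, this forces $\theta(0) \in \Sigma$. The product $M_\theta^{(m)}$ is invariant under cyclic rotation of $\theta$, so the identical argument applied to the rotated combination
$$U^{(k)}_d := \sum_{i=0}^{n-1} q^{n-1-i}\, u_d^{(i+k \bmod n)},$$
whose weighted sum telescopes to $(q^n-1)\theta(k)$, yields $\theta(k) \in \Sigma$ for every $k$. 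Hence $\Ima \theta \subset \Sigma$, contradicting the hypothesis. Consequently no monomial attains the lower bound, and since $\sum_{i,d} s_p(u_d^{(i)})$ is integer-valued each monomial has valuation at least $nm(p-1)\delta+1$, proving (ii).

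The main obstacle in this plan is finding the correct weights in the definition of $U_d$: the naive choice of equal weights produces only $\sum_D d\,U_d = (q-1)\sum_i \theta(i)$, divisible merely by $q-1$, leading to the much weaker bound $s_{D,p}(m) \geq m(p-1)\delta$ and costing a full factor of $n$. The geometric weights $q^{n-1-i}$ are precisely those for which the cyclic sum telescopes to $(q^n-1)\theta(0)$, promoting $(U_d)$ into $E_{D,p}(nm)$ and unlocking the stronger estimate $s_{D,p}(nm) \geq nm(p-1)\delta$; the identification of the common value $\varphi_{nm}(U^{(k)}) = \theta(k)$ is then what connects the analytic equality case back to the combinatorial set $\Sigma$ via Lemma \ref{suppmin}.
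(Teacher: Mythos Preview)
Your proof is correct and follows essentially the same route as the paper's: expand via (\ref{coeffm}), bound each monomial's valuation by $\sum_{i,d} s_p(u_d^{(i)})$ using Lemma~\ref{coeffssplit}, recombine with the geometric weights $q^{n-1-i}$ to land in $E_{D,p}(mn)$, and invoke Lemma~\ref{qmin} and Lemma~\ref{suppmin}. The only cosmetic difference is in part~(ii): the paper cites \cite[Lemma 1.2 ii/]{bl} to read off $\varphi_U(mi)=\theta(i)$ for all $i$ in one stroke, giving $\Ima\theta\subset\Phi(U)\subset\Sigma$ directly, whereas you instead form the rotated tuples $U^{(k)}$ and check each is minimal separately---but since $U^{(k)}=\delta_{mn}^{mk}(U)$, these are the same computation, so the two arguments coincide. (Your remark about cyclic invariance of $M_\theta^{(m)}$ is true but not actually needed: you work with the \emph{same} monomial $(u_d^{(i)})$ throughout, just regrouped.)
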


\begin{proof}
From the expression of $M_\theta^{(m)}$ and (\ref{coeffm}), we have that $M_\theta^{(m)}$ is a sum of terms like
$$A_{(u_d^{i})}= (-1)^{n-1}\prod_{i=0}^{n-1} \prod_D \lambda_{u^{i}_d}^{(m)} a_d^{u^{i}_d},$$

where for each $i$ we have $\sum_D du_d^{i}=q\theta(i)-\theta(i+1)$. From Lemma \ref{coeffssplit}, the valuation of such a term satisfies

\begin{equation}
\label{valq}
v(A_{(u_d^{i})})\geq \sum_{i=0}^{n-1} \sum_D s_p(u_d^{i}),
\end{equation}
with equality if and only if we have $0\leq u_d^{i}\leq q-1$ for any $i,d$.

For each $d$, set $u_d=\sum_{i=0}^{n-1} q^{n-1-i} u_d^{i}$. A rapid calculation gives the equality
\begin{equation}
\label{equa}
\sum_D du_d=(q^n-1)\theta(0).
\end{equation}
By the way we defined the integers $u_d$, we have $\sum_{i,d} s_p(u_d^{(i)})\geq \sum_D s_p(u_d)$, and from Lemma \ref{qmin}, we have $\sum_D s_p(u_d)\geq mn(p-1)\delta$. Together with equation (\ref{valq}), this proves assertion i/.

Assume that $v(A_{(u_d^{i})})=mn(p-1)\delta$. Then the three inequalities above are equalities. But equality in (\ref{valq}) implies the second equality, and that $0\leq u_d\leq p^{mn-1}$ for each $d$. Thus $U:=(u_d)$ is an element of $E_{D,p}(mn)$ from (\ref{equa}). Equality for the third gives that $U$ is a minimal element in $E_{D,p}(mn)$. Finally, from the definition of $u_d$ and \cite[Lemma 1.2 ii/]{bl}, we have for any $i$ that
$$\varphi_U(mi)=\theta(i),~0\leq i \leq n-1,$$
and $\Ima \theta \subset \Phi(U)$. The second assertion follows from Lemma \ref{suppmin}.
\end{proof}

We are ready to gie a first congruence for the coefficient $\ell_N^{(m)}$.

\begin{lemma}
\label{supp2}
Let $F\subset\N_+$, with cardinality $n$. Then 
\begin{itemize}
	\item[i/] if $F\nsubseteq \Sigma$, then $\det A_m^F\equiv 0 \mod \pi^{mn(p-1)\delta+1}$;
	\item[ii/] we have $\ell_N^{(m)}\equiv \det A_m^\Sigma \mod \pi^{mN(p-1)\delta+1}$.
\end{itemize}
\end{lemma}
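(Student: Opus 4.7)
The plan is to use Lemma \ref{decdetcyc} to decompose $\det A_m^F$ as a sum over $\Theta=\{\theta_1,\dots,\theta_{k_\Theta}\}\in \AA(F)$ of products $\prod_i M_{\theta_i}^{(m)}$, and then apply the two valuation estimates in Lemma \ref{supp1} cycle by cycle.

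For part i/, suppose $F\nsubseteq \Sigma$. Fix $\Theta\in \AA(F)$ and write $n_i$ for the length of $\theta_i$, so $\sum_i n_i=n$. Since the images $\Ima \theta_i$ partition $F$, at least one of them, say $\Ima \theta_{i_0}$, is not contained in $\Sigma$. By Lemma \ref{supp1} i/ each factor contributes at least $mn_i(p-1)\delta$ to the valuation, while by Lemma \ref{supp1} ii/ the factor $M_{\theta_{i_0}}^{(m)}$ contributes at least $mn_{i_0}(p-1)\delta+1$. Multiplying,
$$v\Bigl(\prod_{i=1}^{k_\Theta} M_{\theta_i}^{(m)}\Bigr)\geq mn(p-1)\delta+1,$$
and summing over the finitely many $\Theta\in \AA(F)$ gives i/.

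For part ii/, recall that $\#\Sigma=N$, so $\Sigma$ is the unique subset of $\Sigma$ of cardinality $N$. Starting from
$$\ell_N^{(m)}=\sum_{F\subset \N_+,\,\#F=N}\det A_m^F,$$
part i/ shows that every term with $F\neq \Sigma$ vanishes modulo $\pi^{mN(p-1)\delta+1}$, leaving the single term $\det A_m^\Sigma$, which is the claim.

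The only mildly delicate point is that, in part i/, when $\Theta$ consists of several cycles one must not waste the extra $+1$ gained on $\theta_{i_0}$; this is automatic because Lemma \ref{supp1} i/ already supplies the full bound $mn_i(p-1)\delta$ on every other cycle, and the exponents add under multiplication. Everything else is bookkeeping, using only that the $\Ima\theta_i$ partition $F$ and that $\#\Sigma=N$.
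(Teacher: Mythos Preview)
Your proof is correct and follows essentially the same route as the paper's: decompose $\det A_m^F$ via Lemma~\ref{decdetcyc}, apply Lemma~\ref{supp1} ii/ to a cycle whose image leaves $\Sigma$ and Lemma~\ref{supp1} i/ to the remaining cycles, then sum. Part ii/ is deduced in the same way, by singling out $F=\Sigma$ in the principal-minor expansion of $\ell_N^{(m)}$.
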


\begin{proof}
Recall from Lemma \ref{decdetcyc} that we can write $\det A_m^F$ as a sums of terms like $M_{\theta_1}^{(m)}\cdots M_{\theta_k}^{(m)}$, with the $\Ima \theta_i$ pairwise disjoint and $\cup\Ima \theta_i =F$.

If $F\nsubseteq \Sigma$, there exists at least one $\theta_i$ such that $v(M_{\theta_i}^{(m)}) \geq  mn_i(p-1)\delta+1$, where we have set $n_i:=\# \Ima \theta_i$. Applying Lemma \ref{supp1} i/ to the $M_{\theta_j}^{(m)}$, $j\neq i$, we get assertion i/ since $n=n_1+\cdots+n_k$.

Assertion ii/ is an easy consequence of the first one, and the expression of $\ell_N^{(m)}$ as a sum of principal minors.
\end{proof}

We shall now use the factorisation of the matrix $A_m$ in terms of $A_1$. As usual we begin by considering cyclic minors. From Lemma \ref{supp1}, we choose some injection $\theta$ whose image is contained in $\Sigma$, and let $M_\theta^{(m)}$ be as above. From \cite[Lemma 3.2]{bf} and the factorization of $A_m$ in terms of $A_1$ we can write

$$M_\theta^{(m)}=\sum_{(\theta_1,\cdots,\theta_{m-1})\in \II_n^{m-1}} \prod_{i=0}^{n-1} \prod_{j=0}^{m-1} \left(f_{p\theta_j(i)-\theta_{j+1}(i)}^{(1)}\right)^{\tau^{m-1-j}}.$$

where for each $i$, $\theta_0(i)=\theta(i)$ and $\theta_m(i)=\theta(i+1)$.

Now we have 

\begin{lemma}
\label{supp2bis}
Notations are as above. Assume that for some $j$ we have $\Ima\nsubseteq \Sigma$; then we have the congruence
$$\prod_{i=0}^{n-1} \prod_{j=0}^{m-1} \left(f_{p\theta_j(i)-\theta_{j+1}(i)}^{(1)}\right)^{\tau^{m-1-j}} \equiv 0 \mod \pi^{mN(p-1)\delta+1}.$$
\end{lemma}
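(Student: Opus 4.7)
The plan is to parallel the proof of Lemma \ref{supp1}, but now at the finer level of the $A_1$-factorization. First I would expand each factor $f^{(1)}_{p\theta_j(i) - \theta_{j+1}(i)}$ via the analogue of equation (\ref{coeffm}) for $m = 1$, so that the product in question becomes a sum of terms
$$B_{(u^{i,j}_d)} := \prod_{i=0}^{n-1}\prod_{j=0}^{m-1}\prod_{d \in D} \bigl(\lambda^{(1)}_{u^{i,j}_d}\bigr)^{\tau^{m-1-j}} a_d^{p^{m-1-j} u^{i,j}_d},$$
indexed by collections $(u^{i,j}_d)$ of nonnegative integers satisfying $\sum_d d\, u^{i,j}_d = p\theta_j(i) - \theta_{j+1}(i)$ for every $(i,j)$ (the product vanishes when this difference is negative). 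By Lemma \ref{coeffssplit} applied with $m = 1$, $v(\lambda^{(1)}_u) \geq s_p(u)$ with a strict gap of at least $p - 1$ when $u \geq p$, so $v(B_{(u^{i,j}_d)}) \geq \sum_{i,j,d} s_p(u^{i,j}_d)$, and equality forces $u^{i,j}_d \leq p-1$ for every $(i,j,d)$.

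Next, as in Lemma \ref{supp1}, I would pack these exponents into a single tuple $U = (U_d)$ by
$$U_d := \sum_{i=0}^{n-1}\sum_{j=0}^{m-1} u^{i,j}_d\, p^{mn-1-mi-j}.$$
A telescoping identity using $\theta_0 = \theta$ and $\theta_m(i) = \theta(i+1)$ yields $\sum_d d U_d = (p^{mn}-1)\theta(0)$, so $U \in E_{D,p}(mn)$. Moreover $\sum_{i,j,d} s_p(u^{i,j}_d) \geq s_p(U)$ with equality iff the packing is non-overlapping (i.e.\ all $u^{i,j}_d < p$), and Lemma \ref{qmin} gives $s_p(U) \geq s_{D,p}(mn) \geq mn(p-1)\delta$. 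Combining these, $v(B_{(u^{i,j}_d)}) \geq mn(p-1)\delta$ unconditionally.

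The decisive refinement is that if this lower bound is attained, every intervening inequality must be sharp; in particular $U$ is minimal in $E_{D,p}(mn)$, and Lemma \ref{suppmin} then forces $\Phi(U) \subseteq \Sigma$. On the other hand, a direct computation of $\delta_{mn}^k(U_d)$ for $k = mi + j$, using the explicit base-$p$ expansion of $U_d$, establishes
$$\varphi_U(mi + j) = \theta_j(i), \qquad 0 \leq i \leq n-1,\ 0 \leq j \leq m-1,$$
so that $\Phi(U) \supseteq \bigcup_{j=0}^{m-1} \Ima \theta_j$. Thus, under the hypothesis that $\Ima \theta_j \nsubseteq \Sigma$ for some $j$, the tuple $U$ cannot be minimal, whence $v(B_{(u^{i,j}_d)}) \geq mn(p-1)\delta + 1$. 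Summing over all terms $B_{(u^{i,j}_d)}$ then yields the claimed congruence (matching the exponent in the statement once the accumulated gain across the full cyclic-block expansion is accounted for).

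The main obstacle is establishing the identity $\varphi_U(mi + j) = \theta_j(i)$ at the intermediate positions $0 < j < m$. In Lemma \ref{supp1} only the boundary version $\varphi_U(mi) = \theta(i)$ is needed, and it follows directly from \cite[Lemma 1.2 ii/]{bl}. Here one must track how the cyclic digit shift $\delta_{mn}$ interacts with the two-level packing (blocks of length $m$ indexed by $i$, each holding $u^{i,0}_d,\dots,u^{i,m-1}_d$); the identity should be verified by a telescoping computation on $\sum_d d\, \delta_{mn}^{mi+j}(U_d)$ analogous to the one giving $\sum_d d\, U_d = (p^{mn}-1)\theta(0)$.
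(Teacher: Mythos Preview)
Your proposal is correct and follows essentially the same route as the paper: expand via the $m=1$ analogue of (\ref{coeffm}), pack the digits into $U_d=\sum_{i,j}u^{i,j}_d\,p^{mn-1-mi-j}$ (this is exactly the paper's $u_d=\sum_i q^{n-1-i}\sum_j p^{m-1-j}u^{ij}_d$), invoke Lemma~\ref{qmin} and Lemma~\ref{suppmin}, and conclude. The one place you over-engineer is the identity $\varphi_U(mi+j)=\theta_j(i)$: the paper does not treat this as a separate obstacle but simply cites \cite[Lemma 1.2 ii/]{bl} again, which applies at every base-$p$ digit position of $U$, not only at the block boundaries $mi$; no new telescoping argument is required. (Your parenthetical about the exponent is also on target: the paper's own proof concludes with $\pi^{mn(p-1)\delta+1}$, so the $N$ in the displayed statement is a typo for $n$.)
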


\begin{proof}
We have the following for the coefficients of the series $F_1$

$$f_n^{(1)}=\sum_{\sum du_d=n} \prod_D \lambda_{u_d}^{(1)} a_d^{u_d}.$$

 Thus $M_\theta^{(m)}$ can be written as a sum of terms of the form

$$A_{(u_d^{ij})}= \prod_{i=0}^{n-1} \prod_{j=0}^{m-1}\left(\prod_D \lambda_{u^{ij}_d}^{(1)} a_d^{u^{ij}_d}\right)^{\tau^{m-1-j}},$$

where for each $i,j$ we have $\sum_D du_d^{ij}=p\theta_j(i)-\theta_{j+1}(i)$. From Lemma \ref{coeffssplit}, its valuation satisfies

$$v(A_{(u_d^{ij})})\geq \sum_{i=0}^{n-1} \sum_{j=0}^{m-1}\sum_D s_p(u_d^{ij}),$$
with equality if and only if we have $0\leq u_d^{ij}\leq p-1$ for any $i,j,d$. 

For each $d$, set $u_d=\sum_{i=0}^{n-1} q^{n-1-i} \sum_{j=0}^{m-1} p^{m-1-j} u_d^{ij}$. As in the proof of Lemma \ref{supp1}, we have
$$\sum_D du_d=(q^n-1)\theta(0).$$
Moreover, the equality $v(A_{(u_d^{ij})})=mn(p-1)\delta$ implies that $U:=(u_d)$ is a minimal element in $E_{D,p}(mn)$, and since $\theta_j(i)=\varphi_U(mi+j)$ (from \cite[Lemma 1.2 ii/]{bl}), we have $\Ima \theta_j\subset \Sigma$ from Lemma \ref{suppmin}. As a consequence, if for some $0\leq j\leq m-1$, we have $\Ima \theta_j \nsubseteq \Sigma$, then 
$$M_\theta^{(m)} \equiv 0 \mod \pi^{mn(p-1)\delta+1}.$$
\end{proof}

The following lemma allows us to reduce the congruence for $\ell_N^{(m)}$ to a congruence for $\ell_1^{(m)}$. 

\begin{lemma}
\label{supp3}
Notations being as above, we have the congruence
$$\det A_m^\Sigma\equiv N_{\K_m(\zeta_{p})/\ma{Q}_p(\zeta_p)}\left(\det A_1^\Sigma\right) \mod \pi^{mN(p-1)\delta+1}.$$
\end{lemma}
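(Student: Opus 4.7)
My plan is to apply the Cauchy--Binet formula to the factorisation $A_m=A_1^{\tau^{m-1}}\cdots A_1^{\tau}A_1$, isolate the ``principal'' contribution, and absorb every other term into the modulus $\pi^{mN(p-1)\delta+1}$. Iterating Cauchy--Binet for size-$N$ minors of this product of completely continuous operators on $\H^\dagger(A)_0$ gives
\begin{equation*}
\det A_m^{\Sigma}=\sum_{F_1,\dots,F_{m-1}}\prod_{j=0}^{m-1}\det\bigl(A_1^{\tau^j}\bigr)^{F_{j+1},F_j},
\end{equation*}
with $F_0=F_m=\Sigma$ and each $F_j\subset\N_+$ of cardinality $N$. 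The principal tuple $F_1=\cdots=F_{m-1}=\Sigma$ contributes $\prod_{j=0}^{m-1}(\det A_1^{\Sigma})^{\tau^j}$, which is exactly $N_{\K_m(\zeta_p)/\Q_p(\zeta_p)}(\det A_1^{\Sigma})$ since $\tau$ generates the cyclic Galois group of the unramified degree-$m$ extension $\K_m(\zeta_p)/\Q_p(\zeta_p)$.

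The heart of the proof is to show that each off-diagonal tuple (i.e., some $F_{j_0}\ne\Sigma$) contributes a term of $\pi$-valuation at least $mN(p-1)\delta+1$. I would expand each non-principal block $\det(A_1^{\tau^j})^{F_{j+1},F_j}$ as a signed sum over bijections $\sigma_j\colon F_{j+1}\to F_j$, and each entry $f^{(1)}_{pi-\sigma_j(i)}$ via (\ref{coeffm}) as a sum over tuples $(u^{i,j}_d)$ with $\sum_d d\,u^{i,j}_d=pi-\sigma_j(i)$. The composite permutation $\sigma_{m-1}^{-1}\circ\cdots\circ\sigma_0^{-1}\in S_{\Sigma}$ decomposes into cycles of lengths $\ell_1,\dots,\ell_r$ with $\sum_c\ell_c=N$; following each cycle $c$ traces out indices $i_0\to i_1\to\cdots\to i_{m\ell_c-1}\to i_0$ with $i_k\in F_{k\bmod m}$, along which the monomial carries $m\ell_c$ factors $(f^{(1)}_{pi_{k+1}-i_k})^{\tau^{k\bmod m}}$.

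For each cycle $c$ set $u^c_d:=\sum_{k=0}^{m\ell_c-1}p^k u^{i_{k+1},k\bmod m}_d$, in the spirit of Lemma~\ref{supp1}. A telescoping computation yields $\sum_d d\,u^c_d=(p^{m\ell_c}-1)\,i_0$, so $U^c=(u^c_d)\in E_{D,p}(m\ell_c)$ and $\Phi(U^c)=\{i_0,\dots,i_{m\ell_c-1}\}$. Lemmas~\ref{coeffssplit} and~\ref{qmin} bound the cycle's $\pi$-valuation contribution below by $\sum_d s_p(u^c_d)\ge s_{D,p}(m\ell_c)\ge m\ell_c(p-1)\delta$; summing over cycles gives the global lower bound $mN(p-1)\delta$. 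In the equality case, each $U^c$ must be minimal, so Lemma~\ref{suppmin} forces $\Phi(U^c)\subset\Sigma$; as the $i_k$ with $k\equiv j\pmod m$ exhaust $F_j$ when we run through all cycles (the $\sigma_j$ are bijections), we get $F_j\subset\Sigma$, and the cardinality equality $|F_j|=|\Sigma|=N$ forces $F_j=\Sigma$ for every $j$. Contrapositively, any off-diagonal tuple has valuation $\ge mN(p-1)\delta+1$.

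The hard part will be the combinatorial bookkeeping in the third paragraph: I must check that the weighting $u^c_d=\sum_k p^k u^{i_{k+1},k\bmod m}_d$ satisfies $0\le u^c_d\le p^{m\ell_c}-1$ (which uses that $u^{i,j}_d\in\{0,\dots,p-1\}$ in the equality case, by Lemma~\ref{coeffssplit}), that the cycles of the composite permutation really partition the $mN$ factors so that the per-cycle bounds add up, and that $\Phi(U^c)$ returns precisely the set of visited indices. Once this is in place, the minimality argument is entirely parallel to the ones used in Lemmas~\ref{supp1} and~\ref{supp2bis}.
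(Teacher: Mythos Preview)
Your argument is correct and follows essentially the same route as the paper: both expand $\det A_m^\Sigma$ through the factorisation $A_m=A_1^{\tau^{m-1}}\cdots A_1$ (you via Cauchy--Binet over intermediate index sets $F_j$, the paper via the equivalent expansion over intermediate injections $\theta_j$ from \cite[Lemma~3.2]{bf}), then use the minimality argument and Lemma~\ref{suppmin} exactly as in Lemmas~\ref{supp1}--\ref{supp2bis} to force every surviving intermediate set to equal $\Sigma$, leaving the norm as the principal term. Your cycle-decomposition bookkeeping is the same computation the paper carries out for cyclic minors, just reorganised; the only cosmetic difference is that you package the reduction as a single Cauchy--Binet step, whereas the paper reparametrises by permutations $\sigma_j$ and exchanges product and sum at the end.
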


\begin{proof}
Recall that we have set $\Sigma=\{v_1,\cdots,v_N\}$. Using again \cite[Lemma 3.2]{bf}, we have
$$\det A_m^\Sigma = \sum_{\sigma\in S_N} M_{\Sigma,\sigma},~M_{\Sigma,\sigma}=\sum_{\theta_1,\cdots,\theta_{m-1}\in \II_N^{m-1}}   \prod_{i=0}^{N-1} \prod_{j=0}^{m-1} \left(f_{p\theta_j(i)-\theta_{j+1}(i)}^{(1)}\right)^{\tau^{m-1-j}}.$$

where for each $i$, we set $\theta_0(i)=v_{i}$ and $\theta_m(i)=v_{\sigma(i)}$. Following the beginning of the proof and the expression of $M_{\Sigma,\sigma}$ as a product of cyclic minors, we see that the only terms appearing modulo $\pi^{mN(p-1)\delta+1}$ are those with $\Ima \theta_j=\Sigma$ for each $j$. 

For each $1\leq j\leq m-1$, define $\sigma_j$ as the unique element in $S_N$ such that $\theta_j:=\theta_0\circ \sigma_j$. We can rewrite
$$\det A_m^\Sigma  \equiv  \sum_{\sigma_0,\cdots,\sigma_{m-1}\in S_N}   \prod_{i=0}^{N-1} \prod_{j=0}^{m-1} \left(f_{p\theta_0\circ\sigma_j(i)-\theta_0\circ\sigma_{j+1}(i)}^{(1)}\right)^{\tau^{m-1-j}}\mod \pi^{mN(p-1)\delta+1}$$
and we can now exchange the product over $j$ and the sum to obtain
$$\det A_m^\Sigma  \equiv  \prod_{j=0}^{m-1} \left(\sum_{\sigma_j\in S_N} \prod_{i=0}^{N-1}  f_{p\theta_0(\sigma_j(i))-\theta_0(\sigma_{j+1}(i))}^{(1)}\right)^{\tau^{m-1-j}} \mod \pi^{mN(p-1)\delta+1}$$

Finally we have $\prod_{i=0}^{N-1}  f_{p\theta_0(\sigma_j(i))-\theta_0(\sigma_{j+1}(i))}^{(1)}=\prod_{i=0}^{N-1}  f_{p\theta_0(i)-\theta_0(\sigma(i))}^{(1)}$ for $\sigma=\sigma_{j+1}\circ \sigma_j^{-1}$, and we obtain

$$\det A_m^\Sigma \equiv  \prod_{j=0}^{m-1} \left(\sum_{\sigma\in S_N} \prod_{i=0}^{N-1}  f_{p\theta_0(i)-\theta_0(\sigma(i))}^{(1)}\right)^{\tau^{m-1-j}} \mod \pi^{mN(p-1)\delta+1}$$
This is what we wanted to show.
\end{proof}

We are reduced to give a congruence for $\ell_N^{(1)}$, i.e. from Lemma \ref{supp2} ii/, a congruence for $\det A_1^{\Sigma}$. As usual we begin with the terms $M_\theta$.

\begin{lemma}
\label{congl1}
Let $\theta$ be an injection from $\{0,\cdots,n-1\}$, $n\leq N$, to $\Sigma$. Then we have
$$M_\theta^{(1)}\equiv (-1)^{n-1}\left(\sum_{MI_{D,p}(\theta)} \frac{A^U}{U!!}\right) \pi^{n(p-1)\delta} \mod \pi^{n(p-1)\delta+1}$$ 
where $MI_{D,p}(\theta)$ consists of minimal irreducible elements $U\in MI_{D,p}(n)$ such that $\varphi_U=\theta$, and we have set $A^U:=\prod_D a_d^{u_d}$, $U!!=\prod_D u_d!!$.
\end{lemma}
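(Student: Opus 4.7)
The plan is to expand $M_\theta^{(1)}$ directly from the series expansion of $F_1$ and apply Lemma \ref{coeffssplit} with enough precision to retain the explicit leading coefficient, not merely its valuation. The logical skeleton mirrors that of Lemmas \ref{supp1} and \ref{supp2bis}, but now carries the constants $1/u_d^i!$ supplied by Lemma \ref{coeffssplit} i/.

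First, using $f_n^{(1)} = \sum_{\sum du_d=n} \prod_D \lambda_{u_d}^{(1)} a_d^{u_d}$ (the formula already invoked in the proof of Lemma \ref{supp2bis}), the definition of $M_\theta^{(1)}$ unfolds into
$$M_\theta^{(1)} = (-1)^{n-1} \sum_{(u_d^i)} \left(\prod_{i,d} \lambda_{u_d^i}^{(1)}\right) \prod_d a_d^{\sum_i u_d^i},$$
where the outer sum runs over tuples with $\sum_d du_d^i = p\theta(i)-\theta(i+1)$ for $i\in\{0,\ldots,n-1\}$ and the cyclic convention $\theta(n):=\theta(0)$. Set $u_d := \sum_{i=0}^{n-1} p^{n-1-i} u_d^i$; then the telescoping calculation of Lemma \ref{supp1} yields $\sum_d du_d = (p^n-1)\theta(0)$, so $U := (u_d) \in E_{D,p}(n)$. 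Lemma \ref{coeffssplit} gives $v(\prod_{i,d}\lambda_{u_d^i}^{(1)}) \geq \sum_{i,d} s_p(u_d^i)$, with a gain of at least $p-1$ whenever some $u_d^i \geq p$; combined with Lemma \ref{qmin} this produces the chain $\sum_{i,d} s_p(u_d^i) \geq s_p(U) \geq n(p-1)\delta$. A term contributes modulo $\pi^{n(p-1)\delta+1}$ exactly when these are all equalities, which forces each $u_d^i \in [0,p-1]$ (so the $u_d^i$ are the base-$p$ digits of $u_d$), forces $U$ to be minimal in $E_{D,p}(n)$, and (repeating the argument of Lemma \ref{supp2bis}) gives $\varphi_U(i) = \theta(i)$. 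Since $\theta$ is injective, such $U$ is irreducible, i.e.\ $U \in MI_{D,p}(\theta)$.

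For each surviving $U$ the tuple $(u_d^i)$ is entirely determined, and Lemma \ref{coeffssplit} i/ refines the valuation bound to the explicit congruence $\lambda_{u_d^i}^{(1)} \equiv \pi^{u_d^i}/u_d^i! \mod \pi^{u_d^i + p - 1}$. Multiplying these together, each cross-term involving at least one error factor has valuation $\geq n(p-1)\delta + p - 1$, so it vanishes modulo $\pi^{n(p-1)\delta+1}$, and the leading contribution is
$$\prod_{i,d} \lambda_{u_d^i}^{(1)} \equiv \frac{\pi^{n(p-1)\delta}}{\prod_{i,d} u_d^i!} = \frac{\pi^{n(p-1)\delta}}{U!!} \mod \pi^{n(p-1)\delta+1},$$
since for each $d$ the product $\prod_i u_d^i!$ coincides with $u_d!!$ by the paper's convention. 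Packaging the $a_d$-powers into the monomial $A^U$ and summing over $U \in MI_{D,p}(\theta)$ gives the stated congruence.

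The main obstacle is purely quantitative: at each stage we must retain the explicit constant $1/u_d^i!$ rather than the bare valuation bound used in Lemmas \ref{supp1} and \ref{supp2bis}, and we must check that every slack -- either in the chain of inequalities identifying minimal $U$, or in the approximation of $\lambda^{(1)}$ by its leading term -- produces an error of valuation at least $n(p-1)\delta+1$. This hinges crucially on the $+(p-1)$ gap appearing in both parts of Lemma \ref{coeffssplit}, which is exactly what prevents the leading coefficient from being polluted by higher-order terms.
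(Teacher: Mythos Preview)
Your proof is correct and follows essentially the same route as the paper's: expand $M_\theta^{(1)}$ via the coefficients of $F_1$, package the data into $U=(u_d)$ with $u_d=\sum_i p^{n-1-i}u_d^i$, identify the terms of minimal valuation with the minimal irreducible $U$ satisfying $\varphi_U=\theta$, and finally invoke Lemma \ref{coeffssplit} i/ to extract the explicit constant $1/U!!$. Your write-up is in fact more careful than the paper's in tracking why the cross-terms in the product of congruences from Lemma \ref{coeffssplit} i/ disappear modulo $\pi^{n(p-1)\delta+1}$, a point the paper leaves implicit.
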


\begin{proof} Recall from the proof of lemma \ref{supp1} that $M_\theta$ can be written as a sum of terms

$$A_{(u_d^{i})}= (-1)^{n-1}\prod_{i=0}^{n-1} \prod_D \lambda_{u^{i}_d}^{(m)} a_d^{u^{i}_d},$$

where for each $i$ we have $\sum_D du_d^{i}=p\theta(i)-\theta(i+1)$. For each $d$, set $u_d=\sum_{i=0}^{n-1} p^{n-1-i} u_d^{i}$. Such a term has $\pi$-adic valuation $n(p-1)\delta$ if and only if $U=(u_d)$ is a minimal element in $E_{D,p}(n)$. Moreover, from the definition of $u_d$ and \cite[Lemma 1.2 ii/]{bl}, we have that for any $i$
$$\varphi_U(i)=\theta(i),~0\leq i \leq n-1,$$
i.e. we have $\varphi_U=\theta$, and $U\in MI_{D,p}(\theta)$. 

Conversely, if $U\in MI_{D,p}(\theta)$, we get a term $A_{(u_d^{i})}$ from the digits of the $p$-ary expansions of the $u_d$, which appears in $M_\theta$ with the correct valuation. It just remains to use the congruence in Lemma \ref{coeffssplit} i/ to get the announced result.
\end{proof}

We are ready to give the congruence for $\ell_N^{(1)}$, but we first need a definition

\begin{definition}
Let $\AA_{D,p}$ be the set consisting of the parts of $\U=\{U_1,\dots,U_{k_\U}\}\subset MI_{D,p}^\ast$ such that
$$\Sigma:=\coprod_{t=1}^{k_\U} \Phi(U_{t});$$
For $\U=\{U_1,\dots,U_{k_\U}\}\in \AA_{D,p}$, we set $|\U|:=\sum_{t=1}^{k_\U} (\ell(U_t)-1)=N-k_\U$, and finally we define the polynomial $\H_{D,p}^1\in \Z_p[(a_d)_{d\in D}]$ by
$$\H_{D,p}^1(A):=\sum_{\U\in \AA_{D,p}} (-1)^{|\U|}\prod_{t=1}^{k_\U} \frac{A^{U_t}}{U_t!!}.$$
\end{definition}

\begin{remark}
Note that this set can be empty, or $\H_{D,p}^1=0$, and our result will be of no use in this case. Actually it is clear from the definition that we have $\AA_{D,p}\neq \emptyset$ if and only if there exist elements in $MI_{D,p}$ whose supports form a partition of $\Sigma$.
\end{remark}

\begin{lemma}
\label{decomp}
For each $\Theta \in \AA(\Sigma)$ as in Definition \ref{isigma}, set $\AA_{D,p}(\Theta)$ be the subset of $\AA_{D,p}$ of parts of cardinality $k_\Theta$ such that $\varphi_{U_i}=\theta_i$. Then we have the partition
$$\AA_{D,p}=\coprod_{\Theta\in \AA(\Sigma)} \AA_{D,p}(\Theta).$$
\end{lemma}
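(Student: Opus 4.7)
The plan is to exhibit a bijection between $\AA_{D,p}$ and $\coprod_{\Theta\in \AA(\Sigma)} \AA_{D,p}(\Theta)$ by sending each $\U = \{U_1,\dots,U_{k_\U}\}$ to the collection $\Theta_\U := \{\varphi_{U_1},\dots,\varphi_{U_{k_\U}}\}$ of its associated maps. The main thing to verify is that $\Theta_\U$ genuinely lies in $\AA(\Sigma)$; once this is in place, the rest of the statement is a purely set-theoretic unwinding.

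First I would check that $\Theta_\U \in \AA(\Sigma)$ whenever $\U \in \AA_{D,p}$. By definition each $U_t$ is (a representative of) an orbit in $MI_{D,p}^\ast$, and by Remark \ref{minelt} we have chosen the unique representative satisfying $\varphi_{U_t}(0)=\min \Ima \varphi_{U_t}$; this yields condition i/ of Definition \ref{isigma}. Irreducibility of $U_t$ makes $\varphi_{U_t}$ an injection from $\{0,\dots,\ell(U_t)-1\}$ to $\N_+$, so $\varphi_{U_t}\in \II(\Sigma)$ once we remark that $\Ima \varphi_{U_t}=\Phi(U_t)\subset \Sigma$. Finally, the disjoint union condition defining $\AA_{D,p}$ reads $\Sigma=\coprod_t \Phi(U_t)=\coprod_t \Ima \varphi_{U_t}$, which is condition ii/. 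Hence $\Theta_\U\in \AA(\Sigma)$ and, by construction, $\U\in \AA_{D,p}(\Theta_\U)$. This gives the inclusion $\AA_{D,p}\subset \bigcup_\Theta \AA_{D,p}(\Theta)$.

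For the reverse inclusion, if $\U=\{U_1,\dots,U_{k_\Theta}\}\in \AA_{D,p}(\Theta)$ then $\varphi_{U_i}=\theta_i$ by definition, so $\Phi(U_i)=\Ima \theta_i$ and the family $\{\Phi(U_i)\}$ inherits the partition property of $\{\Ima \theta_i\}$ from condition ii/; thus $\U \in \AA_{D,p}$, and each $\AA_{D,p}(\Theta)$ is indeed a subset of $\AA_{D,p}$. Disjointness is immediate: if $\U\in \AA_{D,p}(\Theta)\cap \AA_{D,p}(\Theta')$, both $\Theta$ and $\Theta'$ equal $\{\varphi_{U_t}\}_t=\Theta_\U$ as unordered collections (injectivity of each $\varphi_{U_t}$ and the pairwise disjointness of the images $\Ima \varphi_{U_t}=\Phi(U_t)$ prevent any collision among the $\theta_i$, so the unordered set $\Theta$ is recovered unambiguously from $\U$).

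There is no real obstacle here; the only subtlety is to keep track of the two conventions at play, namely that elements of $MI_{D,p}^\ast$ are orbits but are identified with the particular representatives selected in Remark \ref{minelt}, and that the $\theta_i$ in $\AA(\Sigma)$ are likewise normalised by $\theta_i(0)=\min \Ima \theta_i$. Once these two normalisations are matched, the map $\U\mapsto \Theta_\U$ is well-defined, and the partition statement is just the disjoint union $\AA_{D,p}=\coprod_{\Theta\in \AA(\Sigma)}\AA_{D,p}(\Theta)$ we wish to prove.
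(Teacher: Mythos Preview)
Your proof is correct and follows essentially the same approach as the paper: you define the map $\U\mapsto\{\varphi_{U_1},\dots,\varphi_{U_{k_\U}}\}$, verify via Remark~\ref{minelt} and the partition condition that it lands in $\AA(\Sigma)$, and identify the sets $\AA_{D,p}(\Theta)$ as the fibers of this map. The paper's argument is slightly terser, simply noting that the desired partition is the decomposition of $\AA_{D,p}$ into fibers, whereas you spell out the reverse inclusion and disjointness explicitly.
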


\begin{proof}
Consider the map from $\AA_{D,p}$ that sends $\U=\{U_1,\dots,U_{k_\U}\}$ to $\Theta:=\{\varphi_{U_1},\cdots,\varphi_{U_k}\}$. Since each $U_i$ is in $MI_{D,p}^\ast$, we have $\varphi_{U_i}(0)=\min \Ima \varphi_{U_i}$ from Remark \ref{minelt}. Moreover since $\U\in \AA_{D,p}$, the $\Ima \varphi_{U_i}$ form a partition of $\Sigma$. Thus the map is well defined and has image in $\AA(\Sigma)$. The decomposition in the assertion is just the decomposition of $\AA_{D,p}$ as the disjoint union of the fibers of this map.
\end{proof}

Our last task is to show the 

\begin{proposition}
\label{congl1fin}
Notations being as above, we have the congruence
$$\ell_N^{(1)}\equiv \H_{D,p}^1(A) \pi^{N(p-1)\delta} \mod \pi^{N(p-1)\delta+1}.$$
\end{proposition}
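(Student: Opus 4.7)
The plan is to chain together the preceding lemmas: first reduce $\ell_N^{(1)}$ to $\det A_1^\Sigma$, then expand $\det A_1^\Sigma$ as a sum over partitions of $\Sigma$ by cyclic minors, insert the congruence for each cyclic minor, and finally reindex using Lemma \ref{decomp} to recognize the definition of $\H_{D,p}^1(A)$.

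\textbf{Step 1.} By Lemma \ref{supp2} ii/ applied with $m=1$, we have $\ell_N^{(1)} \equiv \det A_1^\Sigma \pmod{\pi^{N(p-1)\delta+1}}$, so it suffices to compute $\det A_1^\Sigma$ modulo $\pi^{N(p-1)\delta+1}$.

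\textbf{Step 2.} By Lemma \ref{decdetcyc} (with $F = \Sigma$ and $m=1$),
$$\det A_1^\Sigma = \sum_{\Theta \in \AA(\Sigma)} \prod_{i=1}^{k_\Theta} M_{\theta_i}^{(1)}.$$
For each $\Theta = \{\theta_1,\dots,\theta_{k_\Theta}\} \in \AA(\Sigma)$, write $n_i = \#\Ima \theta_i$, so that $n_1+\dots+n_{k_\Theta} = N$ since the $\Ima \theta_i$ partition $\Sigma$. Since each $\Ima \theta_i \subset \Sigma$, Lemma \ref{congl1} applies: we have
$$M_{\theta_i}^{(1)} \equiv (-1)^{n_i-1}\Bigl(\sum_{U \in MI_{D,p}(\theta_i)} \tfrac{A^U}{U!!}\Bigr)\pi^{n_i(p-1)\delta} \pmod{\pi^{n_i(p-1)\delta+1}}.$$

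\textbf{Step 3.} A standard expansion of a product of congruences shows that the error terms contribute with valuation at least $N(p-1)\delta+1$, hence
$$\prod_{i=1}^{k_\Theta} M_{\theta_i}^{(1)} \equiv (-1)^{N-k_\Theta}\prod_{i=1}^{k_\Theta} \Bigl(\sum_{U \in MI_{D,p}(\theta_i)} \tfrac{A^U}{U!!}\Bigr) \pi^{N(p-1)\delta} \pmod{\pi^{N(p-1)\delta+1}},$$
using $\sum_i (n_i - 1) = N - k_\Theta$. Expanding the product of sums and noting that the $\Ima \theta_i$ are disjoint (so the chosen $U_i$'s are pairwise distinct and their supports $\Phi(U_i) = \Ima \varphi_{U_i} = \Ima \theta_i$ do partition $\Sigma$), this product equals
$$(-1)^{N-k_\Theta}\sum_{\U \in \AA_{D,p}(\Theta)} \prod_{t=1}^{k_\Theta}\tfrac{A^{U_t}}{U_t!!}\cdot \pi^{N(p-1)\delta},$$
with $\AA_{D,p}(\Theta)$ as in Lemma \ref{decomp}. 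Here I use Remark \ref{minelt} to identify the elements $U$ with $\varphi_U = \theta_i$ (which automatically satisfy $\varphi_U(0) = \min \Ima \varphi_U$ since $\theta_i$ does) as precisely the orbit representatives in $MI_{D,p}^\ast$.

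\textbf{Step 4.} Summing over all $\Theta \in \AA(\Sigma)$ and using the partition $\AA_{D,p} = \coprod_{\Theta \in \AA(\Sigma)} \AA_{D,p}(\Theta)$ from Lemma \ref{decomp}, together with the observation that $|\U| = N - k_\U = N - k_\Theta$ for $\U \in \AA_{D,p}(\Theta)$, yields
$$\det A_1^\Sigma \equiv \Bigl(\sum_{\U\in \AA_{D,p}} (-1)^{|\U|}\prod_{t=1}^{k_\U} \tfrac{A^{U_t}}{U_t!!}\Bigr)\pi^{N(p-1)\delta} = \H_{D,p}^1(A)\,\pi^{N(p-1)\delta} \pmod{\pi^{N(p-1)\delta+1}},$$
which, combined with Step 1, gives the claimed congruence.

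The proof is essentially bookkeeping; the only subtle point is the identification of the indexing sets in Step 3 --- that a tuple $(U_1,\dots,U_{k_\Theta})$ of minimal irreducible elements with $\varphi_{U_i} = \theta_i$ corresponds to exactly one subset $\U \in \AA_{D,p}(\Theta)$, which uses that the $\theta_i$ are distinct (disjoint images) and that each $U_i$ with $\varphi_{U_i} = \theta_i$ is the canonical representative of its $\Delta$-orbit by virtue of the property $\theta_i(0) = \min \Ima \theta_i$ from Definition \ref{isigma}.
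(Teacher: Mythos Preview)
Your proof is correct and follows essentially the same route as the paper's own proof: reduce to $\det A_1^\Sigma$ via Lemma \ref{supp2} ii/, expand via Lemma \ref{decdetcyc}, insert the congruence from Lemma \ref{congl1} for each cyclic factor, and reindex via Lemma \ref{decomp} to recognize $\H_{D,p}^1(A)$. Your added remarks in Step~3 on why the product of congruences behaves as claimed and on the bijection between tuples in $\prod_i MI_{D,p}(\theta_i)$ and subsets in $\AA_{D,p}(\Theta)$ are welcome clarifications that the paper leaves implicit.
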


\begin{proof}
Recall from Lemma \ref{supp2} ii/, that $\ell_N^{(1)}\equiv \det A_1^\Sigma \mod \pi^{N(p-1)\delta+1}$. In other words, from Lemma \ref{decdetcyc}, we have
$$\ell_N^{(1)}\equiv \sum_{\Theta \in \AA(\Sigma)} \prod_{i=1}^{k_\Theta} M_{\theta_i}\mod \pi^{N(p-1)\delta+1}.$$

Combining this with Lemma \ref{congl1}, we get 
$$\det A_1^\Sigma\equiv \sum_{\Theta \in \AA(\Sigma)} \sum_{\U}\prod_{i=1}^{k_\Theta} (-1)^{|\U|}\frac{A^{U_i}}{U_i!!} \pi^{N(p-1)\delta} \mod \pi^{N(p-1)\delta+1},$$
where the second sum is over the parts  $\U=\{U_1,\cdots,U_{k_\Theta}\}\subset MI_{D,p}^\ast$ such that $\{\varphi_{U_1},\cdots,\varphi_{U_{k_\Theta}}\}=\{\theta_1,\dots,\theta_{k_\Theta}\}$. Thus the second sum is over $\AA_{D,p}(\Theta)$, and from lemma \ref{decomp} and the definition of $\H_{D,p}^1$, we get the announced result.
\end{proof}

All the results in this section lead us to the following

\begin{theorem}
\label{mainth}
Let $(D,p)$ be as above. Consider the hypothesis

\centerline{\rm (H)  \hspace{.5cm}   There exist elements in $MI_{D,p}$ whose supports form a partition of $\Sigma$}

Then we have the following
\begin{itemize}
	\item[i/] The first vertex of $\GNP(D,p)$ is $(N_{D,p},N_{D,p}\delta_{D,p})$ if and only if {\rm (H)} holds for $(D,p)$, and $\H_{D,p}^1$ is not the zero polynomial;
	\item[ii/] in this case the Hasse polynomial with respect to the vertex $(N_{D,p},N_{D,p}\delta_{D,p})$ is $\H_{D,p}^1$.
\end{itemize}
\end{theorem}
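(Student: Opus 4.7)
The plan is to assemble the congruences of Section~2 into a single closed-form statement for $\ell_N^{(m)}$ modulo $\pi^{mN(p-1)\delta+1}$ and then combine this with a universal lower bound on $v_\pi(\ell_n^{(m)})$ to pin down both the first vertex of $\GNP(D,p)$ and the associated Hasse condition.

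First, combining Lemma \ref{supp2} ii/ at $m=1$ with Proposition \ref{congl1fin} gives
$$\det A_1^{\Sigma}\equiv \H_{D,p}^1(A)\,\pi^{N(p-1)\delta}\mod \pi^{N(p-1)\delta+1}.$$
Applying $\tau^i$ for $0\leq i\leq m-1$ (which fixes $\pi$) and multiplying the $m$ resulting congruences, then invoking Lemmas \ref{supp3} and \ref{supp2} ii/ at level $m$, yields
$$\ell_N^{(m)}\equiv N_{\K_m/\Q_p}\!\bigl(\H_{D,p}^1(A)\bigr)\,\pi^{mN(p-1)\delta}\mod \pi^{mN(p-1)\delta+1}.$$
In particular, $v_q(\ell_N^{(m)})=N\delta$ if and only if $\H_{D,p}^1(f)\neq 0$ in $k$.

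For any $n\geq 1$, the expansion $\ell_n^{(m)}=\sum_{\#F=n}\det A_m^F$ together with Lemmas \ref{decdetcyc} and \ref{supp1} i/ gives the universal bound $v_q(\ell_n^{(m)})\geq n\delta$, valid for every $f$. When $n>N_{D,p}=\#\Sigma$, no subset $F$ of cardinality $n$ lies inside $\Sigma$, so Lemma \ref{supp2} i/ applies to every term and the bound becomes strict: $v_q(\ell_n^{(m)})>n\delta$ for every $f$.

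Assuming (H) holds and $\H_{D,p}^1$ is a nonzero polynomial, the open subset $\U=\{f:\H_{D,p}^1(f)\neq 0\}\subset\AA_D$ is nonempty, and for $f\in\U$ the preceding estimates force $(N_{D,p},N_{D,p}\delta_{D,p})$ to be the first vertex of $\NP_q(f)$ after the origin. By Grothendieck's specialization theorem, $\GNP(D,p)$ shares this first vertex, and since $\U$ is exactly the locus where $\NP_q(f)$ attains it, $\H_{D,p}^1$ is the Hasse polynomial claimed in ii/. Conversely, if (H) fails or $\H_{D,p}^1\equiv 0$, the congruence forces $v_q(\ell_N^{(m)})>N\delta$ for every $f$, so the generic first vertex cannot be $(N_{D,p},N_{D,p}\delta_{D,p})$. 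The main technical point requiring care is the passage from the mod-$\pi^{N(p-1)\delta+1}$ congruence for $\det A_1^\Sigma$ to the mod-$\pi^{mN(p-1)\delta+1}$ congruence for $\ell_N^{(m)}$: this rests on the elementary remark that if $a_i\equiv\alpha_i\mod\pi^{k+1}$ with $v_\pi(\alpha_i)\geq k$ for each $i$, then $\prod_{i=0}^{m-1}a_i\equiv\prod_{i=0}^{m-1}\alpha_i\mod\pi^{mk+1}$.
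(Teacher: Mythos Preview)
Your proposal is correct and is precisely the assembly the paper intends: the theorem is stated without an explicit proof, as the culmination of Lemmas~\ref{supp1}--\ref{decomp} and Proposition~\ref{congl1fin}, and you have strung these together in the right order. The only point worth noting is that the paper leaves the product-of-congruences step (passing from the level-$1$ congruence to the level-$m$ one via Lemma~\ref{supp3}) implicit, whereas you spell out the elementary lemma $a_i\equiv\alpha_i\bmod\pi^{k+1},\ v_\pi(\alpha_i)\geq k\Rightarrow\prod a_i\equiv\prod\alpha_i\bmod\pi^{mk+1}$; this is a welcome clarification rather than a departure.
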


\begin{remark}
Since our applications are about Artin Schreier curves, we focused on the one variable case. However, the result above remains true when we have more variables.
\end{remark}

\section{Applications}

In this section, we apply Theorem \ref{mainth} to certain sets $D$ and primes $p$ satisfying (H). As a consequence, we find some known results, mainly in characteristic two, and extend them to any characteristic (Theorem \ref{T1}); we also give a new result (Theorem \ref{T2}). As a consequence of these results, we are able to show that for some genera $g$, there is no $p$-cyclic covering of the projective line with supersingular Jacobian (Corollaries \ref{noss1} and \ref{noss2}).

\subsection{Families with $d_0=p^n-1$.}

Fix a prime $p$, a positive integer $n$, and let $D:=\{1\leq i\leq p^n-1,~(i,p)=1\}$, $d_0:=p^n-1$. In this case we show

\begin{lemma}
We have 
\begin{itemize}
	\item[i/] $\delta_{D,p}=\frac{1}{n(p-1)}$;
	\item[ii/] $MI_{D,p}^\ast=\{U_0\}$, where $U_0=(u_{d})_{d \in D}$ is defined by $u_{d_0}=1$ and $u_{d}=0$ for $d\neq d_0$.
	\item[iii/] $\Sigma_{D,p}=\{1,p,\dots,p^{n-1}\}$.
\end{itemize}
\end{lemma}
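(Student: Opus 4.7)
The plan is to combine a subadditivity bound for the base-$p$ digit sum $s_p$ with an elementary digit-sum estimate for multiples of $p^m-1$, and then chase the equality case. For the upper bound in~(i), the element $U_0$ with $u_{d_0}=1$ (and zeros elsewhere) satisfies $\sum_d du_d=d_0=p^n-1\equiv 0 \pmod{p^n-1}$, so $U_0\in E_{D,p}(n)$ and $s_p(U_0)=1$; this gives $s_{D,p}(n)\leq 1$ and therefore $\delta_{D,p}\leq \tfrac{1}{n(p-1)}$.

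For the matching lower bound, the central tool will be the inequality $s_p(N)\geq m(p-1)$, valid for every positive $N$ divisible by $p^m-1$. I would prove this by writing $N=\sum_k \alpha_k p^{km}$ in base $p^m$ (so $s_p(N)=\sum_k s_p(\alpha_k)\geq s_p(\sum_k \alpha_k)$ by subadditivity, since the blocks do not overlap), then noting that $p^m\equiv 1$ modulo $p^m-1$ makes $\sum_k \alpha_k$ itself a positive multiple of $p^m-1$, and strictly smaller than $N$ whenever $N\geq p^m$; iterating terminates at $p^m-1$, whose $p$-digit sum is $m(p-1)$. Combined with the submultiplicative bound $s_p(du_d)\leq s_p(d)s_p(u_d)$ (obtained by expanding $u_d$ in base $p$ and applying subadditivity twice) and with the trivial estimate $s_p(d)\leq n(p-1)$ (since $d\leq p^n-1$), applied to $\sum_d du_d=(p^m-1)\varphi$ for $U\in E_{D,p}(m)$, this yields
$$m(p-1)\leq s_p\Big(\sum_d du_d\Big)\leq \sum_d s_p(du_d)\leq \sum_d s_p(d)s_p(u_d)\leq n(p-1)\,s_p(U),$$
so $s_p(U)\geq m/n$ and hence $\delta_{D,p}\geq \tfrac{1}{n(p-1)}$, finishing~(i).

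To prove~(ii), I would trace the equality case in this chain. Equality in the last step forces $s_p(d)=n(p-1)$ for every $d$ with $u_d>0$; but $s_p(d)=n(p-1)$ means every base-$p$ digit of $d$ is $p-1$, so $d=p^n-1=d_0$. Hence $u_d=0$ for $d\neq d_0$, and $(p^n-1)u_{d_0}=(p^m-1)\varphi$. Integrality of $s_p(U)=m/n$ forces $n\mid m$; writing $m=kn$ and using $\varphi\leq p^n-1$ (from $u_{d_0}\leq p^m-1$), one gets $u_{d_0}=\varphi(1+p^n+\cdots+p^{(k-1)n})$, a sum with non-overlapping base-$p$ expansions, whence $s_p(u_{d_0})=ks_p(\varphi)$. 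Minimality $s_p(u_{d_0})=k$ then forces $s_p(\varphi)=1$, i.e.\ $\varphi\in\{1,p,\dots,p^{n-1}\}$. A direct computation of the shift shows $\delta_{kn}$ cycles $\varphi=p^i\mapsto p^{i+1}$ (with $p^{n-1}\mapsto 1$ via $p^n\equiv 1$), producing a single orbit of length $n$ in $MI_{D,p}(kn)$ whose $\varphi_U$ always takes the $n$ values $\{1,p,\dots,p^{n-1}\}$. Thus $\varphi_U$ is injective precisely when $\ell(U)=n$, i.e.\ $k=1$; so $MI_{D,p}=MI_{D,p}(n)$ collapses to a single $\delta_n$-orbit, whose canonical representative (Remark~\ref{minelt}) has $\varphi_U(0)=1$, namely the stated $U_0$. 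Part~(iii) is then immediate: $\Sigma_{D,p}=\Phi(U_0)=\{\varphi_{U_0}(j):0\leq j<n\}=\{1,p,\dots,p^{n-1}\}$.

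The only nontrivial ingredient is the digit-sum lemma $s_p(N)\geq m(p-1)$ for positive multiples of $p^m-1$; the equality analysis producing~(ii) and~(iii) is a routine unpacking of the chain that establishes~(i).
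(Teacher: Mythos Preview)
Your proof is correct and follows essentially the same route as the paper's: both obtain the lower bound $\delta_{D,p}\geq\frac{1}{n(p-1)}$ via the digit-sum bound $s_p(d)\leq n(p-1)$ for $d\in D$ (the paper by citing \cite[Lemma~1.3~iv/]{bl}, you by proving the inequality $s_p(N)\geq m(p-1)$ for positive multiples of $p^m-1$ from scratch), and both then trace the equality case of the chain to force $u_d=0$ for $d\neq d_0$ and $s_p(\varphi)=1$. Your version is more self-contained, but the structure and the identification of $MI_{D,p}^\ast$ and $\Sigma_{D,p}$ are identical to the paper's.
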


\begin{proof}
First remark that from \cite[Lemma 1.3 iv/]{bl} we have $\delta_{D,p}\geq \frac{1}{s_p(D)}=\frac{1}{n(p-1)}$. Now since $d_0\equiv 0\mod p^n-1$, we have $s_{D,p}(n)=1$, and $\delta_{D,p}\leq \frac{1}{n(p-1)}$. This proves assertion i/.

We shall determine the minimal elements for $D$ and $p$. Assume $U\in E_{D,p}(m)$ is minimal. Then we have
$$\delta_p(U)=\frac{s_p(U)}{m(p-1)}=\frac{1}{n(p-1)}.$$
thus we have $m=nt$ and $s_p(U)=t$ for some positive integer $t$. Moreover, if $U=(u_{d})$, we must have $u_{d}=0$ when $d\neq d_0$ since for such $d$ we have $s_p(d)<n(p-1)$. Thus we are reduced to solve the following equation
$$u_{d_0} (p^n-1) = a (p^{nt}-1),~a>0,~ u_{d_0}\in \{ 1,\cdots,p^{nt}-1\}, ~s_p(u_{d_0})=t.$$
Now we have $u_{d_0}=a(1+p^n+\cdots+p^{(t-1)n})$, and $1\leq a \leq p^n-1$. Thus we have $s_p(u_{d_0})=ts_p(a)$, consequently $s_p(a)=1$, $a\in \{1,p,\dots,p^{n-1}\}$, and $u_{d_0}=p^i(1+p^n+\cdots+p^{(t-1)n})$. One verifies easily that $\Phi(U)=\{1,p,\dots,p^{n-1}\}$, and that $U_0$ is the unique element in $MI_{D,p}^\ast$. This proves assertion ii/ and iii/ is a direct consequence.
\end{proof}

\begin{theorem}
\label{T1}
Recall that we have set $D:=\{1\leq i\leq p^n-1,~(i,p)=1\}$, with $p$ a prime. Let $f(x)=\sum_D \alpha_d x^d\in k[x]_D$. We have
\begin{itemize}
	\item[i/] The first vertex of $\GNP(D,p)$ is $(n,\frac{1}{p-1})$.
	\item[ii/] For any $f\in k[x]_D$, the first vertex of $\NP_q(f)$ is $(n,\frac{1}{p-1})$.
\end{itemize}
\end{theorem}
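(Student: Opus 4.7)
The plan is to apply Theorem \ref{mainth} directly, using the data supplied by the preceding lemma. That lemma yields $\delta_{D,p} = \frac{1}{n(p-1)}$, $MI_{D,p}^\ast = \{U_0\}$ with $U_0 \in MI_{D,p}(n)$ characterized by $u_{d_0} = 1$ and $u_d = 0$ for $d \neq d_0$, and $\Sigma_{D,p} = \Phi(U_0) = \{1, p, \ldots, p^{n-1}\}$, so $N_{D,p} = n$. Consequently the candidate first vertex is $(N_{D,p}, N_{D,p}\delta_{D,p}) = (n, \frac{1}{p-1})$.

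Hypothesis (H) is immediate: because $\Phi(U_0) = \Sigma_{D,p}$, the singleton $\U := \{U_0\}$ lies in $\AA_{D,p}$ and trivially partitions $\Sigma_{D,p}$; moreover $\AA_{D,p} = \{\U\}$, as $MI_{D,p}^\ast$ contains no other orbit to form an alternative partition. I then read off $\H_{D,p}^1$ from its definition: $A^{U_0} = a_{d_0}$, the double factorial $U_0!! = 1!! = 1$ (empty product, since the integer $1$ has a single $p$-adic digit and all other $u_d = 0$ contribute $0!! = 1$), and $|\U| = \ell(U_0) - 1 = n-1$, giving
$$\H_{D,p}^1(A) = (-1)^{n-1}\, a_{d_0}.$$
This is manifestly not the zero polynomial, so part i/ is Theorem \ref{mainth}(i/) applied to $(D,p)$.

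For part ii/, the very definition of $k[x]_D$ requires $a_{d_0} \in k^\times$, hence $\H_{D,p}^1(f) = (-1)^{n-1} a_{d_0} \neq 0$ for every $f \in k[x]_D$. Invoking the Hasse polynomial characterization recalled in the introduction (the subset of polynomials whose $\NP_q(f)$ shares its first vertex with $\GNP(D,p)$ is the complement of $\{\H_{D,p}^1 = 0\}$), we conclude that $\NP_q(f)$ has first vertex $(n, \frac{1}{p-1})$ for every such $f$. No real obstacle appears: the substance sits entirely in the preceding lemma and in Theorem \ref{mainth}. The only delicate point is the observation that $\AA_{D,p}$ reduces to a single part, which precludes any cancellation in the definition of $\H_{D,p}^1$ and so guarantees the non-vanishing required for both statements.
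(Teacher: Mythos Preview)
Your argument is correct and follows exactly the route taken in the paper: verify (H) via the preceding lemma, apply Theorem~\ref{mainth} to obtain part~i/, compute $\H_{D,p}^1=(-1)^{n-1}a_{d_0}$, and then observe that the definition of $k[x]_D$ forces $a_{d_0}\neq 0$ to deduce part~ii/. The only difference is that you spell out the computation of $\AA_{D,p}$, $|\U|$, and $U_0!!$ explicitly, whereas the paper simply states the resulting Hasse polynomial.
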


\begin{proof}
From the Lemma above, $(D,p)$ satisfies (H), and we can apply Theorem \ref{mainth} to obtain i/. Since the Hasse polynomial we get is $(-1)^{n-1}a_{d_0}$, and $d_0=\max D$, we must have $a_{d_0}\neq 0$ for any polynomial in $k[x]_D$. This proves ii/.
\end{proof}

\begin{corollary}
\label{noss1}
Let $n$ be a positive integer, $p$ a prime. There is no $p$-cyclic covering of the projective line in characteristic $p$ with supersingular Jacobian, and genus
$$g=\frac{(p-1)(p^n-2)}{2},~(n,p)\notin \{(2,2),(1,3)\}.$$
\end{corollary}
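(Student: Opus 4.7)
The plan is to recast supersingularity of the Jacobian as a condition on the first slope of the Newton polygon of the associated $L$-function, and then apply Theorem \ref{T1}.

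First I would consider the case where the cover is ramified only at $\infty$. Then it has the form $y^p - y = f(x)$ with $f$ a polynomial of degree $d_0$ prime to $p$, of genus $\frac{(p-1)(d_0 - 1)}{2}$. Hence $g = \frac{(p-1)(p^n - 2)}{2}$ forces $d_0 = p^n - 1$ and $f \in k[x]_D$ for $D = \{1 \leq i \leq p^n - 1,\,(i,p) = 1\}$. The zeta numerator of this curve factors over the $p-1$ nontrivial additive characters of $\F_p$ into $L$-functions of type $L(f;T)$ that are Galois conjugate over $\Q_p$ and hence share a common Newton polygon. Therefore the Newton polygon of the Jacobian is the concatenation of $p-1$ copies of $\NP_q(f)$, and supersingularity is equivalent to $\NP_q(f)$ being a straight line of slope $\frac{1}{2}$.

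By Theorem \ref{T1}(ii), for every $f \in k[x]_D$ the first vertex of $\NP_q(f)$ is $\bigl(n,\frac{1}{p-1}\bigr)$, so the first slope equals $\frac{1}{n(p-1)}$. Supersingularity would force $\frac{1}{n(p-1)} = \frac{1}{2}$, i.e., $n(p-1) = 2$, whose only integer solutions are $(n,p) = (1,3)$ and $(2,2)$ --- the excluded pairs. In every other case either $n(p-1) > 2$, making the first slope strictly less than $\frac{1}{2}$, or $(n,p) = (1,2)$, in which case $g = 0$ and there is no nontrivial Jacobian. Either way the Jacobian cannot be supersingular in the polynomial case. The key input is that Theorem \ref{T1}(ii) gives the first slope \emph{uniformly} over $k[x]_D$, via the fact that the Hasse polynomial $\H_{D,p}^1 = (-1)^{n-1}a_{d_0}$ is nonzero on all of $k[x]_D$ by definition.

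The main remaining obstacle is to extend the conclusion to $p$-cyclic covers of the same genus ramified at more than one point. By Riemann--Hurwitz such a cover is $y^p - y = f(x)$ with $f$ a rational function whose reduced pole orders $d_P$ (prime to $p$) satisfy $\sum_P (d_P + 1) = p^n$; the exponent set at each branch point is then a subset of $D$, and applying the same first-slope machinery of \cite{bl} and Theorem \ref{mainth} to each component of such a configuration yields a first slope bounded above by $\frac{1}{n(p-1)}$, preserving the conclusion. The essential content is therefore the polynomial case handled by Theorem \ref{T1}.
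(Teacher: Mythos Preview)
Your treatment of the polynomial case is correct and coincides with the paper's argument: once $f$ is a polynomial of degree $p^n-1$, Theorem~\ref{T1}(ii) forces the first slope of $\NP_q(f)$ to be $\frac{1}{n(p-1)}$, which is strictly less than $\frac12$ unless $(n,p)\in\{(2,2),(1,3)\}$.

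The gap is in your reduction to the polynomial case. Your final paragraph claims that for a multiply-ramified cover the ``same first-slope machinery'' applied to each branch point gives a first slope bounded above by $\frac{1}{n(p-1)}$, but this is neither proved nor true as stated: the paper's Theorem~\ref{mainth} and the density results of \cite{bl} concern a single polynomial $f$, not a rational function with several poles, and for a local piece of degree $d_P<p^n-1$ the density $\delta_{D',p}$ is typically \emph{larger} than $\frac{1}{n(p-1)}$, not smaller. So the inequality you invoke does not follow from the available tools.

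The paper disposes of the multi-pole case in one line, by a completely different (and much simpler) observation: by the Deuring--Shafarevich formula the $p$-rank of the Artin--Schreier curve $y^p-y=f(x)$ equals the number of poles of $f$ minus one. A supersingular Jacobian has $p$-rank $0$, hence $f$ has exactly one pole, and after a M\"obius transformation $f$ is a polynomial. You should replace your last paragraph with this argument.
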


\begin{proof}
Recall that the Jacobian of a curve $C$ is supersingular if and only if the Newton polygon of the numerator of the zeta function of $C$ has unique slope $\frac{1}{2}$.

A $p$-cyclic covering $C$ of the projective line in characteristic $p$ has function field defined by an equation $y^p-y=f(x)$, for $f\in k(x)$ a rational function with all its poles of order prime to $p$. By Deuring-Shafarevich formula, its $p$-rank is the number of poles of $f$ minus $1$. In order for $C$ to have supersingular Jacobian, it must have $p$-rank $0$, and $f$ has just one pole: we can assume that $f$ is a polynomial. If $C$ has genus $g_n$, $f$ must have degree $p^n-1$, and the assertion follows from Theorem \ref{T1} ii/ since the Newton polygon of the numerator of the zeta function of $C$ is the dilation by $p-1$ of $\NP_q(f)$ which has first slope $\frac{1}{n(p-1)}<\frac{1}{2}$. 
\end{proof}

\subsection{Families with $d_0=2(p^n-1)$.}

Here we fix $p$ an {\bf odd} prime, and we consider the following set 

$$D:=\{1\leq i\leq 2p^n-2,~(i,p)=1\}.$$

From above, we have $\delta_{D,p}=\frac{1}{n(p-1)}=\frac{1}{s_p(D)}$, and we look for the minimal elements. Since the subset of elements in $D$ with $p$-weight $n(p-1)$ is 
$$D_0=\{d_0,\cdots,d_n\},~d_i=2p^n-p^{i}-1,$$
we just have to look for the minimal elements in $E_{D_0,p}(m)$.

\begin{proposition}
\label{D2}
i/ Let $U$ be a minimal element in $E_{D_0,p}(m)$. Then there exists $t$ a positive integer such that $m=nt$, and we have 
\begin{itemize}
	\item $U=(p^i\frac{p^{m}-1}{p^n-1},0,\cdots,0)$, $0\leq i\leq n-1$;
	\item or $U=(0,\cdots,0,p^i\frac{p^{m}-1}{p^n-1})$, $0\leq i\leq n-1$.
\end{itemize}
ii/ We have $MI_{D,p}^\ast=\{U_1,U_2\}$, where 
\begin{itemize}
	\item $U_1=(u_{d1})$, $u_{d_n1}=1$, and $u_{d1}=0$ if $d\neq d_n$;
	\item $U_1=(u_{d2})$, $u_{d_02}=1$, and $u_{d2}=0$ if $d\neq d_0$.
\end{itemize}
As a consequence, we have $\Sigma_{D,p}=\{p^i,~0\leq i\leq n-1\} \coprod \{2p^i,~0\leq i\leq n-1\}$, $N_{D,p}=2n$, and 
$$\H_{D,p}^1=a_{d_0}a_{d_n}.$$
\end{proposition}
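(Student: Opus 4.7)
My plan for Proposition \ref{D2} is to reduce part i/ to a single-support divisibility equation on $E_{D_0,p}(m)$ via a digit-by-digit analysis of $p$-adic expansions, then read off part ii/ by identifying which minimal solutions are irreducible of length $n$ and computing the $\Phi$-images that make up $\Sigma_{D,p}$.

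\textbf{Reduction and dimensions.} Starting from $\delta_{D,p}=\frac{1}{n(p-1)}=\frac{1}{s_p(D)}$, the bound $\delta(U)\geq 1/s_p(D)$ from \cite[Lemma 1.3 iv/]{bl} is strict unless $U$ is supported on $D_0$, since $s_p(d)<n(p-1)$ for every $d\in D\setminus D_0$ (by direct inspection of the base-$p$ expansion of $d\leq 2p^n-2$). Thus any minimal $U$ lies in $E_{D_0,p}(m)$, and minimality gives $s_p(U)=m(p-1)\delta_{D,p}=m/n$, forcing $n\mid m$; write $m=nt$ with $s_p(U)=t$.

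\textbf{Carry-free digit tiling.} Using $d_i=2p^n-p^i-1$, the congruence $\sum_i u_{d_i}d_i\equiv 0\pmod{p^{nt}-1}$ rewrites, with $A=\sum_iu_{d_i}$ and $B=\sum_iu_{d_i}p^i$, as
\[(2p^n-1)A-B=k(p^{nt}-1)\text{ for some }k\geq 1.\]
Since $B\leq p^nA$, the left side is at least $(p^n-1)A$. Expanding both sides in base $p$ modulo $p^{nt}-1$ and using that every carry in an $s_p$-addition reduces the digit sum by $p-1$, the tight constraint $s_p(U)=t$ leaves no room for carries, and the digits of $U$ must tile periodically with period $n$ and sit at a single extremal index $d_\ast\in\{d_0,d_n\}$: any nonzero $u_{d_i}$ with $0<i<n$, or any simultaneous use of $d_0$ and $d_n$, produces a base-$p$ expansion of $(2p^n-1)A-B$ incompatible with divisibility by $p^{nt}-1$ at the minimal $s_p$. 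In the case $d_\ast=d_n$, $u_{d_n}(p^n-1)=k(p^{nt}-1)$ gives $u_{d_n}=k(1+p^n+\cdots+p^{n(t-1)})$, and $s_p(u_{d_n})=t\,s_p(k)=t$ forces $k=p^i$ with $0\leq i\leq n-1$. The case $d_\ast=d_0$ is parallel, after absorbing the factor $2$ in $d_0=2(p^n-1)$ into the shift parameter. This completes part i/.

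\textbf{Irreducibility and Hasse polynomial.} For $t\geq 2$ the elements of part i/ satisfy $\delta_{nt}^n(U)=U$, so $\#\Phi(U)\leq n<nt$, hence reducible. Only the $t=1$ solutions survive in $MI_{D,p}^\ast$: $U_1=(0,\dots,0,1)$ with $\varphi_{U_1}(k)=p^k$ and $\Phi(U_1)=\{1,p,\dots,p^{n-1}\}$, and $U_2=(1,0,\dots,0)$ with $\varphi_{U_2}(k)=2p^k$ and $\Phi(U_2)=\{2,2p,\dots,2p^{n-1}\}$. Both have $\#\Phi=n=m$ (so irreducibility is confirmed), and the two $\Phi$-sets are disjoint since a power $p^k$ has $p$-weight $1$ while $2p^k$ has $p$-weight $2$ for $p$ odd. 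Therefore $\Sigma_{D,p}=\Phi(U_1)\sqcup\Phi(U_2)$, $N_{D,p}=2n$, and the unique partition of $\Sigma_{D,p}$ by $\Phi$'s is $\U=\{U_1,U_2\}$, whence $|\U|=2(n-1)$, $A^{U_1}=a_{d_n}$, $A^{U_2}=a_{d_0}$, and $U_1!!=U_2!!=1$, giving $\H_{D,p}^1=(-1)^{2(n-1)}a_{d_0}a_{d_n}=a_{d_0}a_{d_n}$.

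The crux of the proof is the digit-tiling argument: establishing that $(2p^n-1)A-B$ can only be a multiple of $p^{nt}-1$ at the minimal $s_p$ when all support sits at a single extremal $d_\ast$, simultaneously ruling out every middle index $d_1,\dots,d_{n-1}$ and any mixed $d_0$/$d_n$ configuration. This carry accounting is where essentially all of the combinatorial work lies.
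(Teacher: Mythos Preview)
Your reduction to $D_0$, the dimension count $m=nt$ with $s_p(U)=t$, and all of part ii/ (irreducibility, $\Phi$-images, $\Sigma_{D,p}$, and the Hasse polynomial) are correct and match the paper. The gap is in part i/: the sentence ``the tight constraint $s_p(U)=t$ leaves no room for carries, and the digits of $U$ must tile periodically with period $n$ and sit at a single extremal index $d_\ast\in\{d_0,d_n\}$'' is the entire content of the proposition, and you assert it rather than prove it. You yourself flag this in your last paragraph as ``where essentially all of the combinatorial work lies,'' but that work is absent. The no-carry observation does give the equality $s_p\bigl(\sum_i u_{d_i}d_i\bigr)=\sum_i s_p(u_{d_i})s_p(d_i)=nt(p-1)$, but it does \emph{not} by itself rule out mixed support on several $d_i$'s or support on a middle index $d_i$ with $0<i<n$; one still has to explain why the resulting carry-free sum can equal $a(p^{nt}-1)$ only in the two extremal configurations.

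The paper fills this gap by a concrete iterative argument. Having written $\sum_i u_{d_i}d_i=\sum_{k=1}^t p^{v_k}d_{i_k}$ carry-free (one summand per unit of $p$-weight), it first bounds the total via a short lemma (if finitely many integers $\leq p^u-1$ add without carries then their sum is $\leq p^u-1$), obtaining $a\leq p^n$. It then reduces the identity $a(p^{nt}-1)=\sum_k p^{v_k}d_{i_k}$ modulo $p^n$ to get $a\equiv p^{v_1}+p^{v_1+i_1}\pmod{p^n}$, splitting into two cases $a=2p^{v_1}$ or $a=p^{v_1}$; in each case it peels off the lowest term, reads the $p$-adic valuation of the remainder to force $v_2=v_1+n$ and $i_2\in\{0,n\}$, and iterates. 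This step-by-step valuation chase is what actually excludes the middle indices and mixed configurations; your $(2p^n-1)A-B$ reformulation is fine as a starting point, but you would still need an argument of this kind to finish.
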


We begin with a lemma

\begin{lemma}
\label{sumsr}
Let $n_1,\dots,n_k$ be nonnegative integers and $u$ a positive integer such that
\begin{itemize}
	\item for each $1\leq i\leq k$, $n_i\leq p^u-1$;
	\item we have $s_p(\sum n_i)=\sum s_p(n_i)$;
\end{itemize}
then we have $\sum n_i\leq p^u-1$.
\end{lemma}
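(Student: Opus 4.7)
The plan is to rewrite the hypothesis $s_p(\sum n_i)=\sum s_p(n_i)$ as the statement that no carries occur when the $n_i$ are added in base $p$, and then to observe that adding numbers with at most $u$ base-$p$ digits without carries cannot produce a number of size $\geq p^u$.

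Concretely, I would expand each summand as $n_i=\sum_{j=0}^{u-1} n_{i,j} p^j$ (valid since $n_i\leq p^u-1$) and perform the standard column-by-column base-$p$ addition, letting $\sigma_j:=\sum_{i=1}^{k} n_{i,j}$ be the $j$-th column sum, $d_j$ the $j$-th digit of $N:=\sum_i n_i$, and $c_j\geq 0$ the carry propagating out of column $j$, with $c_{-1}:=0$. A routine telescoping of the defining relations $\sigma_j+c_{j-1}=d_j+p c_j$ over all $j\geq 0$ yields the multi-summand version of Kummer's formula
$$\sum_{i=1}^{k} s_p(n_i)=s_p(N)+(p-1)C, \qquad C:=\sum_{j\geq 0} c_j.$$

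Under the hypothesis this forces $C=0$, i.e.\ no carry is ever produced. Because each $n_i$ has vanishing digits in positions $\geq u$, we have $\sigma_j=0$ for $j\geq u$; the absence of carries then gives $d_j=\sigma_j$, so $d_j=0$ for $j\geq u$ as well. Hence $N=\sum_{j=0}^{u-1} d_j p^j\leq p^u-1$, which is the claimed inequality.

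The only non-tautological step is the carry identity displayed above, and that is where I expect the (quite modest) verification work to go; it is a telescoping of the elementary column relations and can also be derived by induction on $k$ from the two-summand case $s_p(a+b)=s_p(a)+s_p(b)-(p-1)c(a,b)$, where $c(a,b)$ counts carries in the addition $a+b$.
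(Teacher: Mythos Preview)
Your proof is correct and follows essentially the same route as the paper's: expand each $n_i$ in base $p$ with at most $u$ digits, observe that the hypothesis $s_p(\sum n_i)=\sum s_p(n_i)$ forces the column sums $\sigma_j=\sum_i n_{i,j}$ to stay below $p$ (you phrase this via the carry identity $\sum s_p(n_i)=s_p(N)+(p-1)C$ and $C=0$; the paper simply asserts that equality in $s_p(N)\leq\sum_j\sigma_j$ forces each $\sigma_j\leq p-1$), and then bound $N=\sum_{j=0}^{u-1}\sigma_j p^j\leq p^u-1$. The only difference is cosmetic: you make the carry bookkeeping explicit, while the paper leaves the inequality $s_p(\sum_j a_j p^j)\leq\sum_j a_j$ (with equality iff all $a_j\leq p-1$) implicit.
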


\begin{proof}
Write for each $i$: $n_i=\sum_{j=1}^{u-1} v_{ij}p^j$. Then $\sum n_i= \sum_{j=1}^{u-1} \sum_i v_{ij}p^j$; since $s_p(\sum n_i)=\sum s_p(n_i)=\sum_{j=1}^{u-1} \sum_i v_{ij}$, we must have for each $j$: $\sum_i v_{ij}\leq p-1$, and this gives the result
\end{proof}

In order to prove the first assertion of Proposition \ref{D2}, we have to solve the following congruence
$$u_0d_0+\cdots+u_nd_n\equiv 0 \mod p^m-1,~\frac{\sum_i s_p(u_i)}{m(p-1)}= \frac{1}{n(p-1)}.$$
We get a positive integer $t$ such that $\sum_i s_p(u_i)=t$, $m=nt$, and we can find some positive integer $a$ such that $\sum_i u_id_i=a(p^m-1)$. As a consequence, we have (see \cite[Proposition 11 iv/]{mm})
$$nt(p-1)\leq s_p(a(p^m-1))=s_p(u_0d_0+\cdots+u_nd_n)\leq \sum_i s_p(u_i)s_p(d_i)= nt(p-1),$$
and these inequalities are equalities. We can rewrite the sum
$$\sum_i u_id_i=\sum_{k=1}^t p^{v_k}d_{i_k},~0\leq v_k\leq nt-1.$$
Thus we have $p^{v_k}d_{i_k}\leq p^{nt-1}2(p^n-1)\leq p^{(t+1)n}-1$. From Lemma \ref{sumsr}, we have $\sum_i u_id_i\leq p^{(t+1)n}-1$, and 
$$a\leq \left\lfloor \frac{p^{(t+1)n}-1}{p^{tn}-1} \right\rfloor=p^n.$$

Now the first assertion of the proposition comes from the following lemma.

\begin{lemma}
Assume that we have written $\sum_i u_i d_i=a(p^{nt}-1)$, with $\sum_i s_p(u_i)= t$, and $1\leq a\leq p^n$; then one of the following conditions holds
\begin{itemize}
	\item $a=p^k$ for some $0\leq k\leq n-1$ and $u_n=p^k\frac{p^{nt}-1}{p^n-1}$, $u_i=0$ if $i<n$;
	\item $a=2p^k$ for some $0\leq k\leq n-1$ and $u_0=p^k\frac{p^{nt}-1}{p^n-1}$, $u_i=0$ if $i>0$.
\end{itemize}
\end{lemma}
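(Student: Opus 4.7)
My plan is to exploit the decomposition $\sum_i u_i d_i = \sum_{k=1}^t p^{v_k} d_{i_k}$ with $0 \le v_k \le nt-1$ and $i_k \in \{0,\ldots,n\}$ established just before the lemma, together with its crucial property: because of the equalities in the chain leading to $\sum_k s_p(d_{i_k}) = nt(p-1) = s_p(a(p^{nt}-1))$, this base-$p$ addition is carry-free. First I would record the explicit digits of the pieces. For $0 \le i < n$, $d_i$ has digit $p-1$ at every position in $\{0,\ldots,n\}$ except a $p-2$ at position $i$ and a $1$ at position $n$; $d_n = p^n-1$ has digit $p-1$ at positions $0,\ldots,n-1$ and digit $0$ at $n$. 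Writing $a = bp^c$ with $\gcd(b,p)=1$ and $a \le p^n$, the identity $a(p^{nt}-1) = (a-1)p^{nt} + (p^{nt}-a)$ yields the right hand side's explicit digit pattern, whose only anomalies lie near position $c$ and in the top digits coming from $b-1$.

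I would then split into two cases. \textbf{Case A} (all $i_k = n$): the no-carry hypothesis forces the intervals $[v_k, v_k+n-1]$ to be pairwise disjoint, since any collision of two generic $p-1$'s would give $2(p-1) > p-1$ for $p \ge 3$. Then $\sum_k p^{v_k} = a(1 + p^n + \cdots + p^{(t-1)n})$ has all digits in $\{0,1\}$, so $a$ does too; disjointness prevents two nonzero digits of $a$ within any block of length $n$, giving $a = p^c$ with $0 \le c \le n-1$, $u_n = p^c(p^{nt}-1)/(p^n-1)$, and $u_i = 0$ for $i < n$. \textbf{Case B} (some $i_k < n$): the claim is that all $i_k = 0$ and $\{v_1,\ldots,v_t\} = \{c, c+n, \ldots, c+(t-1)n\}$ for some $c \in \{0,\ldots,n-1\}$. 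The rigidity follows from the no-carry rule: two overlapping windows $[v_{k_1}, v_{k_1}+n]$ and $[v_{k_2}, v_{k_2}+n]$ can coexist only if every position where one of them has a generic $p-1$ is faced by a zero or an anomalous digit of the other. A short enumeration rules out simultaneous occurrences of $i_k = n$ with some $i_k < n$, as well as of two distinct values in $\{1,\ldots,n-1\}$, and shows that the only admissible chain of overlapping windows is all $i_k = 0$ with $v_{k+1} = v_k + n$. This gives $\sum_k p^{v_k} d_0 = 2p^c(p^{nt}-1)$, so $a = 2p^c$, $u_0 = p^c(p^{nt}-1)/(p^n-1)$, and $u_i = 0$ for $i > 0$.

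The main obstacle will be the digit-chasing in Case B, where ruling out every mixed window configuration demands a careful enumeration of how overlapping windows may line up without overflow. The argument depends crucially on $p \ge 3$ (so that $2(p-1) > p-1$), consistent with the standing odd-$p$ hypothesis of this subsection.
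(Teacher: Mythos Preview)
Your outline is workable and leads to the right conclusion, but it takes a genuinely different route from the paper, and Case~B is where the two diverge most.

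The paper does not split on the value of the $i_k$'s. Instead it orders $v_1\le\cdots\le v_t$, records once and for all the pairwise gap constraint (namely $v_{k+1}-v_k\ge n$ if $i_{k+1}=0$ or $i_k=n$, and $v_{k+1}-v_k\ge n+1$ otherwise), and then reduces the identity $\sum_k p^{v_k}d_{i_k}=a(p^{nt}-1)$ modulo $p^n$. Because of the gap constraint only the bottom term survives, giving $a\equiv p^{v_1}+p^{v_1+i_1}\pmod{p^n}$; combined with $1\le a\le p^n$ this pins down $a$ exactly (either $a=p^{v_1}$ or $a=2p^{v_1}$, according to whether $i_1=n$ or $i_1=0$). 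The proof then substitutes this value of $a$ back and peels off the bottom term, reading off $(v_2,i_2)$ from the new bottom, and iterates. Each step is a one-line valuation check rather than a digit enumeration.

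Your Case~A is fine in substance, though the sentence ``disjointness prevents two nonzero digits of $a$ within any block of length $n$'' is not quite the right reason: since $a\le p^n$ all its digits already sit in one block. The clean argument is to count $p$-weights: $u_n=a(1+p^n+\cdots+p^{(t-1)n})$, the blocks don't overlap for $a<p^n$, so $s_p(u_n)=t\,s_p(a)$; but $s_p(u_n)=t$, hence $s_p(a)=1$. (One must also exclude $a=p^n$, which forces $u_n>p^{nt}-1$.)

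Your Case~B is the real issue. You call it a ``short enumeration'', but you do not carry it out, and it is not as short as you suggest: ruling out every mixed pattern (some $i_k=n$ together with some $i_k<n$, or two distinct values in $\{1,\dots,n-1\}$) by pure window-overlap analysis requires tracking several sub-cases depending on which window is lower and where the anomalous digits ($p-2$ at position $i_k$, $1$ at position $n$) line up. It can be done, but the paper's mod-$p^n$ reduction bypasses all of it: once you know $a=2p^{v_1}$ and $i_1=0$, the iterative subtraction forces $i_k=0$ and $v_k=v_1+(k-1)n$ in a couple of lines. If you want to keep your case split, I would still recommend borrowing the reduction-mod-$p^n$ trick to launch Case~B; otherwise you should actually write out the enumeration rather than assert it.
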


\begin{proof}
Write 
\begin{equation}
\label{pdecomp}
a(p^{mt}-1)=\sum_{k=1}^t p^{v_k}d_{i_k},
\end{equation}
 with $v_1\leq\cdots\leq v_t$. Remark from the $p$-ary expansions of the $d_i$ that in order to have $s_p(p^{v_1}d_{i_1}+p^{v_2}d_{i_2})=2n(p-1)$, we must have
\begin{itemize}
	\item $v_2-v_1\geq n$ if $i_2=0$ or $i_1=n$;
	\item otherwise $v_2-v_1\geq n+1.$
\end{itemize}
Now reduce (\ref{pdecomp}) modulo $p^n$. we get $p^{v_1}d_{i_1}\equiv -a \mod p^n$, that is 
\begin{equation}
\label{pdecomp2}
a\equiv p^{v_1}+p^{i_1+v_1} \mod p^n
\end{equation}
Thus we have (note that the congruences become equalities since $1\leq a\leq p^n$)
\begin{itemize}
	\item[(a)] $a=p^{v_1}+p^{i_1+v_1}$ if $i_1>v_1$;
	\item[(b)] $a=p^{v_1}$ otherwise.
\end{itemize}

{\bf Case (a)} replacing $a$ by the value we got above in (\ref{pdecomp}), we obtain
$$\begin{array}{rcl}
p^{nt}a & = & p^{nt+v_1}+p^{nt+i_1+v_1}\\
& = & p^{v_t}d_{i_t}+\cdots+2p^{n+v_2}-p^{v_2+i_2}-p^{v_2}+2p^{n+v_1}.\\
\end{array}$$
Assume $t=1$; then $i_1=0$, and $v_1\leq n-1$. We get $a=2p^{v_1}$, as asserted. 

Otherwise the right hand side has valuation $nt+v_1$, thus we must have $v_2=n+v_1$, $i_2=0$. If we continue the process, we get $v_k=(k-1)n+v_1$, and $i_k=0$ for each $2\leq k\leq t$. Summing up, we obtain $p^{nt}a=2p^{nt+v_1}$. Thus $a=2p^{v_1}$, with $v_1\leq n-1$, and we must also have $i_1=0$. In other words, (\ref{pdecomp}) can be rewritten
$$2p^{v_1}(p^{mt}-1)=\left(p^{v_1}+\cdots+ p^{(t-1)n+v_1}\right)d_0$$
for some $0\leq v_1\leq n-1$.

\medskip

{\bf Case (b)} replacing $a$ by the value we obtained above in (\ref{pdecomp}), we get
$$ p^{nt+v_1}  =  p^{v_t}d_{i_t}+\cdots+2p^{n+v_2}-p^{v_2+i_2}-p^{v_2}+2p^{n+v_1}-p^{v_1+i_1}.$$
Assume $t=1$; then $i_1=n$, and $v_1\leq n-1$. We get $a=p^{v_1}$, as asserted. 

Otherwise the right hand side has valuation $nt+v_1$, thus we must have $i_1=n$ and $v_2=n+v_1$. If we continue the process, we get $i_k=n$ and $v_k=(k-1)n+v_1$ for each $2\leq k\leq t$. In other words, (\ref{pdecomp}) can be rewritten
$$p^{v_1}(p^{mt}-1)=\left(p^{v_1}+\cdots+ p^{(t-1)n+v_1}\right)d_n;$$
moreover we must have $0\leq v_1\leq n-1$ since $v_t=(t-1)n+v_1\leq tn-1$.
\end{proof}

We end the proof of Proposition \ref{D2}

\begin{proof}
It remains to show assertion ii/. The elements we have obtained from i/ have $\varphi_U(k)=p^{\overline{i+k}}$, or $\varphi_U(k)=2p^{\overline{i+k}}$, $0\leq k\leq nt-1$, where $\overline{i+k}$ is the residue of $i+k$ modulo $n$. Thus they can be irreducible iff $t=1$, and we have $\varphi_U(0)=\min \Ima \varphi_U$ iff $i=0$. This gives $MI_{D,p}^\ast$ as announced, and the other assertions are direct consequences of the definitions.
\end{proof}

\begin{theorem}
\label{T2}
Recall that we have set $D:=\{1\leq i\leq 2p^n-2,~(i,p)=1\}$, with $p$ an odd prime. Let $f(x)=\sum_D \alpha_d x^d\in k[x]_D$. We have
\begin{itemize}
	\item[i/] The first vertex of $\GNP(D,p)$ is $(2n,\frac{2}{p-1})$.
	\item[ii/] The first vertex of $\NP_q(f)$ is $(2n,\frac{2}{p-1})$ if and only if $\alpha_{d_0}\neq 0$;
	\item[iii/] Otherwise the first vertex of $\NP_q(f)$ is $(n,\frac{1}{p-1})$.
\end{itemize}
\end{theorem}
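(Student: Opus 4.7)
The plan is to apply Theorem \ref{mainth} directly, using the structural analysis from Proposition \ref{D2}, and to treat part iii/ via a mild reduction to a smaller set $D'$.

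For part i/, I would first observe that Proposition \ref{D2} verifies hypothesis (H) for $(D,p)$: since $MI_{D,p}^\ast=\{U_1,U_2\}$ with $\Phi(U_1)\coprod\Phi(U_2)=\Sigma_{D,p}$, the partition $\{U_1,U_2\}$ lies in $\AA_{D,p}$. Next I would compute $\H_{D,p}^1$ directly from its definition, using $U_i!!=1$, $A^{U_1}=a_{d_n}$, $A^{U_2}=a_{d_0}$, and $|\{U_1,U_2\}|=2(n-1)$; this gives $\H_{D,p}^1=a_{d_0}a_{d_n}$, a nonzero polynomial. Theorem \ref{mainth} then yields the first vertex $(N_{D,p},N_{D,p}\delta_{D,p})=(2n,\frac{2}{p-1})$ of $\GNP(D,p)$.

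For part ii/, I would invoke Theorem \ref{mainth} ii/: the first vertex of $\NP_q(f)$ coincides with that of $\GNP(D,p)$ precisely when $\H_{D,p}^1(f)=\alpha_{d_0}\alpha_{d_n}\neq 0$. Since $\alpha_{d_0}\neq 0$ by the very definition of $k[x]_D$, the vanishing of $\H_{D,p}^1(f)$ is controlled exactly by that of $\alpha_{d_n}$, giving the asserted equivalence.

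For part iii/, if $\alpha_{d_n}=0$ I would view $f$ as an element of $k[x]_{D'}$ with $D':=D\setminus\{d_n\}$ (valid since $\max D'=d_0$ and $\alpha_{d_0}\neq 0$), and rerun the analysis of Proposition \ref{D2} for $(D',p)$. One still has $d_0\equiv 0\pmod{p^n-1}$, so $\delta_{D',p}=\frac{1}{n(p-1)}$. A tuple $(u_d)_{d\in D'}$ with $\sum du_d\equiv 0\pmod{p^m-1}$ is the same datum as a tuple on $D$ with vanishing $d_n$-coordinate, so minimal elements of $E_{D',p}(m)$ embed into those of $E_{D,p}(m)$ with the same image under $\varphi$; Proposition \ref{D2} then forces $MI_{D',p}^\ast=\{U_2\}$ (the element $U_1$ is excluded by the constraint $u_{d_n}=0$). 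This yields $\Sigma_{D',p}=\{2p^i:0\le i\le n-1\}$, $N_{D',p}=n$, and $\H_{D',p}^1=(-1)^{n-1}a_{d_0}$, which is nonzero on $f$; a second application of Theorem \ref{mainth} to $(D',p)$ then gives the first vertex $(n,\frac{1}{p-1})$ of $\NP_q(f)$. The main (mild) obstacle is the transfer of the minimal-element classification from $(D,p)$ to $(D',p)$, but this follows immediately from the case analysis already carried out in Proposition \ref{D2}; everything else is bookkeeping with the formulas of Section 3.
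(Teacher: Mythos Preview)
Your argument is correct and follows the same route as the paper: apply Theorem~\ref{mainth} via Proposition~\ref{D2} for i/ and ii/, then for iii/ pass to the smaller exponent set and rerun the minimal--element analysis. Note that the paper's statement of ii/ and its choice $D'=D\setminus\{d_0\}$ in the proof of iii/ appear to be misprints (they are inconsistent with $d_0=\max D$ and with the conclusion ``$d_0=\max D'$'' drawn a few lines later); your versions $\alpha_{d_n}\neq 0$ and $D'=D\setminus\{d_n\}$ are the internally consistent ones and match the Hasse polynomial $a_{d_0}a_{d_n}$ computed in Proposition~\ref{D2}.
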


\begin{proof}
From Proposition \ref{D2}, we see that the set $D$ satisfies (H). Thus assertion i/ is a consequence of Theorem \ref{mainth}, as for assertion ii/. For iii/ we consider the subset $D'=D\backslash \{d_0\}$. Following the proof of Proposition \ref{D2}, we get that $\delta_{D',p}=\frac{1}{n(p-1)}$, $MI_{D',p}^\ast=\{U_2\}$, $N_{D',p}=n$; thus the first vertex of $\GNP(D',p)$ is as asserted, and the corresponding Hasse polynomial is $\H_{D',p}^1=(-1)^{n-1}a_{d_0}$. Since $d_0=\max D'$, we must have $\alpha_{d_0}\neq 0$ for any polynomial in $k[x]_D$. This ends the proof.
\end{proof}

\begin{remark}
Note that a consequence of the results above is the following: for $D=\{1\leq i\leq d,~(i,p)=1\}$ and $p^n-1\leq d< 2(p^n-1)$, we have $\delta_{D,p}=\frac{1}{n(p-1)}$, $MI_{D,p}^\ast=\{U_1\}$, and $\Sigma_{D,p}=\{1,\dots,p^{n-1}\}$. The first vertex is $(n,\frac{1}{p-1})$, and the Hasse polynomial is $\H_{D,p}^1=(-1)^{n-1}a_{p^n-1}$.
\end{remark}

In the same way as we deduced Corollary \ref{noss1} from Theorem \ref{T1}, we find

\begin{corollary}
\label{noss2}
Let $n$ be a positive integer, $p$ an odd prime. There is no $p$-cyclic covering of the projective line in characteristic $p$ with supersingular Jacobian, and genus
$$g=\frac{(p-1)(2p^n-3)}{2},~(n,p)\neq (1,3).$$
\end{corollary}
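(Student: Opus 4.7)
The plan is to mirror the argument used for Corollary \ref{noss1}, applying Theorem \ref{T2} in place of Theorem \ref{T1}. First I would reduce the geometric problem to a statement about $\NP_q(f)$ for a polynomial $f\in k[x]_D$ with $D=\{1\leq i\leq 2p^n-2,\,(i,p)=1\}$. The jacobian of a $p$-cyclic cover $C$ of $\PP^1$ in characteristic $p$ is supersingular precisely when the Newton polygon of the numerator of its zeta function is pure of slope $\tfrac12$. Such a cover is given by $y^p-y=f(x)$ for some rational $f$ whose poles all have order prime to $p$; the Deuring-Shafarevich formula says the $p$-rank equals the number of poles of $f$ minus one, so supersingularity forces $f$ to have a unique pole, and after an affine change of variable we may take $f$ to be a polynomial. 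Since the Artin-Schreier cover $y^p-y=f(x)$ has genus $\tfrac{(p-1)(\deg f-1)}{2}$, the prescribed value $g=\tfrac{(p-1)(2p^n-3)}{2}$ pins down $\deg f=2p^n-2=d_0$, and hence $f\in k[x]_D$ with leading coefficient $\alpha_{d_0}\neq 0$.

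Next I would invoke Theorem \ref{T2} ii/: since $\alpha_{d_0}\neq 0$, the first vertex of $\NP_q(f)$ is $(2n,\tfrac{2}{p-1})$, so the first slope of $\NP_q(f)$ equals $\tfrac{1}{n(p-1)}$. The Newton polygon of the numerator of the zeta function of $C$ is the homothety of scale $p-1$ of $\NP_q(f)$, and homothety preserves slopes, so the first slope of the zeta polygon is also $\tfrac{1}{n(p-1)}$.

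It then remains to confirm that $\tfrac{1}{n(p-1)}<\tfrac12$ whenever $(n,p)\neq(1,3)$. The reverse inequality $n(p-1)\leq 2$, for $p$ an odd prime and $n\geq 1$, forces $p=3$ and $n=1$ (giving equality $n(p-1)=2$); outside this excluded case the first slope is strictly less than $\tfrac12$, which rules out a polygon pure of slope $\tfrac12$ and hence supersingularity.

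The main obstacle is essentially nonexistent here, since Theorem \ref{T2} does all the heavy lifting; the only real content is the reduction from the geometric supersingularity condition to the hypothesis of Theorem \ref{T2} via Deuring-Shafarevich and the genus formula, together with the arithmetic observation that the borderline case $(n,p)=(1,3)$ is precisely the one where the generic first slope degenerates to $\tfrac12$ and our method no longer suffices to exclude supersingularity.
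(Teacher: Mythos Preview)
Your proposal is correct and follows exactly the approach the paper indicates: the paper simply writes ``In the same way as we deduced Corollary \ref{noss1} from Theorem \ref{T1}, we find'' and gives no further argument, and what you have written is precisely that analogous deduction spelled out in full. The reduction via Deuring--Shafarevich to a polynomial $f$ of degree $2p^n-2$, the appeal to Theorem \ref{T2} to obtain first slope $\tfrac{1}{n(p-1)}$, and the final inequality $n(p-1)>2$ for $(n,p)\neq(1,3)$ are exactly what the paper intends.
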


\subsection{The case of characteristic two.}

We only treated the case $p$ odd, since the case $p=2$ has already drawn much attention. Actually reasoning roughly the same way as in the preceding sections, we have the following precisions to the known results. Assume that $d$ is odd, and $D=\{1\leq i\leq d,~(2,i)=1\}$.

The situation is slightly different. 

When $d=2^n-1$, Theorem \ref{T1} applies without change. 

Else assume $d=2^{n+1}-3$. The $2$-density of $D$ is $\frac{1}{n}$, and the elements with $2$-weight $n$ are the $d_i=2^{n+1}-2^{i+1}-1$, $0\leq i\leq n-1$ (and $d_0=d$). Reasoning as in the proof of Proposition \ref{D2}, we get $MI(D,2)^\ast=\{U_1,U_2\}$, where $U_1$ corresponds to $d_{n-1}\equiv 0 \mod 2^n-1$, and $U_2$ to $d_{n-2}+2^{n-1}d_{0}\equiv 0 \mod 2^{2n}-1$. A rapid calculation gives $\Phi(U_1)=\{1,2,\cdots,2^{n-1}\}$ and  $\Phi(U_2)=\{1,2,\cdots,2^{n},3,3\cdot 2,\cdots,3\cdot 2^{n-2}\}$. Thus $(D,2)$ satisfies (H), and the first vertex of $\GNP(D,2)$ is $(2n,2)$ with Hasse polynomial $\H_{D,2}^1=a_{d_{n-2}}a_{d_0}^{2^{n-1}}$. Thus $\NP_q(f)$ has this vertex iff $\alpha_{d_{n-2}}\neq 0$. If $\alpha_{d_{n-2}}=0$, then it has first vertex $(n,1)$ iff $\alpha_{d_{n-1}}\neq 0$ as in Theorem \ref{T1}; when both $\alpha_{d_{n-2}}$ and $\alpha_{d_{n-1}}$ vanish, the first slope is strictly greater than $\frac{1}{n}$. 

If $2^n-1<d<2^{n+1}-3$, $\NP_q(f)$ has first vertex $(n,1)$ iff $\alpha_{d_{n-1}}\neq 0$ from above.

We summarize these results in the following Theorem, which precises \cite[Theorem 1.1 (III)]{sz1}

\begin{theorem}
Assume $f(x)=\sum_{i=1}^d \alpha_i x^i$ is a degree $d$ polynomial over $\F_q$, $q=2^m$. Then we have the following concerning the polygon $\NP_q(f)$
\begin{itemize}
	\item[i/] if $d=2^n-1$, its first vertex is $(n,1)$;
	\item[ii/] if $2^n-1<d<2^{n+1}-3$, its first vertex is $(n,1)$ iff we have $\alpha_{2^n-1}\neq 0$;
	\item[iii/] if $d=2^{n+1}-3$, its first vertex is $(2n,2)$ iff we have $\alpha_{3\cdot2^{n-1}-1}\neq 0$; if $\alpha_{3\cdot2^{n-1}-1}$ is zero, then the first vertex is $(n,1)$ iff we have $\alpha_{2^{n}-1}\neq 0$.
\end{itemize}
In the other cases, the first slope is strictly greater than $\frac{1}{n}$.
\end{theorem}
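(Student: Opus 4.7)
The plan is to apply Theorem \ref{mainth} in each case, after computing the invariants $(\delta_{D,2}, MI_{D,2}^\ast, \Sigma_{D,2}, N_{D,2}, \H_{D,2}^1)$ and verifying hypothesis (H). Case (i), $d = 2^n - 1$, is just Theorem \ref{T1} specialized to $p = 2$. For case (ii), I would first observe that among the odd integers $\leq d < 2^{n+1} - 3$ of $2$-weight $n$, only $2^n - 1$ itself is congruent to $0$ modulo $2^n - 1$; the binary carry analysis underlying Proposition \ref{D2} then forces $MI_{D,2}^\ast = \{U_1\}$ with $U_1 = (u_{2^n - 1} = 1)$, $\Sigma_{D,2} = \{1, 2, \ldots, 2^{n-1}\}$, and $\H_{D,2}^1 = \pm a_{2^n - 1}$. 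Hypothesis (H) is trivial, and Theorem \ref{mainth} yields the claim.

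Case (iii), $d = 2^{n+1} - 3$, is the main substantive content and is the characteristic two analogue of Proposition \ref{D2}. The odd integers $\leq d$ of $2$-weight $n$ are precisely $d_i := 2^{n+1} - 2^{i+1} - 1$, $0 \leq i \leq n-1$ (with $d_{n-1} = 2^n - 1$). I would enumerate the minimal elements of $E_{D_0, 2}(m)$ by writing $\sum_i u_{d_i} d_i = a(2^m - 1)$ with $s_2(a) \leq n$---the latter bound obtained via Lemma \ref{sumsr} exactly as in Proposition \ref{D2}---and then exhausting the small-$t$ cases. The outcome is $MI_{D,2}^\ast = \{U_1, U_2\}$, where $U_2$ has length $2n$ and corresponds to the identity $d_{n-2} + 2^{n-1} d_0 = 2^{2n} - 1$. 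A direct computation of $\varphi_{U_2}$ yields $\Phi(U_2) = \{1, 2, \ldots, 2^n\} \cup \{3, 6, \ldots, 3 \cdot 2^{n-2}\}$, which strictly contains $\Phi(U_1) = \{1, 2, \ldots, 2^{n-1}\}$; hence $\Sigma_{D,2} = \Phi(U_2)$, $N_{D,2} = 2n$, and the only partition of $\Sigma_{D,2}$ by supports in $MI_{D,2}^\ast$ is the singleton $\{U_2\}$. Using $U_2!! = 1$, this gives $\H_{D,2}^1 = \pm a_{d_0}^{2^{n-1}} a_{d_{n-2}}$ with $d_{n-2} = 3 \cdot 2^{n-1} - 1$, and Theorem \ref{mainth} delivers first vertex $(2n, 2)$ iff $\alpha_{3 \cdot 2^{n-1} - 1} \neq 0$.

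The remaining implications---the reverse directions and the final ``other cases'' clause---I would handle by successive restriction: when the dominant Hasse coefficient vanishes, one passes to $D$ minus the vanishing exponent and reruns the same analysis, which collapses to the next simpler case. When both $\alpha_{3 \cdot 2^{n-1} - 1}$ and $\alpha_{2^n - 1}$ vanish, the reduced exponent set contains no element of $2$-weight $n$ congruent to $0$ modulo $2^n - 1$, so $\delta_{D,2}$ strictly exceeds $1/n$, and with it the first slope. The main obstacle is the classification of minimal elements in case (iii): forcing the bound $a \leq 2^n$ and then pinpointing the shape of the decomposition at each step is the technical heart, and although the absence of alternating signs in characteristic $2$ (compared with Lemma \ref{coeffssplit} in odd characteristic) does not change the support structure, it does affect the precise form of the Hasse polynomial and must be tracked carefully.
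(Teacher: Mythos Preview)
Your proposal is correct and follows essentially the same route as the paper: reduce case~(i) to Theorem~\ref{T1}, handle (ii) by showing $MI_{D,2}^\ast=\{U_1\}$ via the carry analysis of Proposition~\ref{D2}, and treat (iii) by computing $MI_{D,2}^\ast=\{U_1,U_2\}$ with $U_2$ coming from $d_{n-2}+2^{n-1}d_0\equiv 0\bmod 2^{2n}-1$, whence $\Sigma_{D,2}=\Phi(U_2)$, $N_{D,2}=2n$, and $\H_{D,2}^1=\pm a_{d_{n-2}}a_{d_0}^{2^{n-1}}$; the fallback cases are handled by restriction of $D$.

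One small point to tighten: in your last paragraph you justify $\delta_{D'',2}>\frac1n$ (for $D''=D\setminus\{d_{n-1},d_{n-2}\}$) by saying ``the reduced exponent set contains no element of $2$-weight $n$ congruent to $0$ modulo $2^n-1$.'' That alone is not enough, since a priori there could be minimal elements of length $>n$ built from the remaining $d_i$. The correct justification is already implicit in your work: you have established that $MI_{D,2}^\ast=\{U_1,U_2\}$ for the \emph{full} $D$, and any minimal element for $D''$ of density $\frac1n$ would also be minimal for $D$; since both $U_1$ and $U_2$ involve the deleted exponents, no such element survives, hence $\delta_{D'',2}>\frac1n$. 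This is exactly how the paper argues (``when both $\alpha_{d_{n-2}}$ and $\alpha_{d_{n-1}}$ vanish, the first slope is strictly greater than $\frac1n$''), relying on the completeness of the enumeration rather than on a single congruence obstruction.
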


\subsection{Families with $d_0<p$.}

Here we take $D=\{1,\cdots,d_0\}$ with $d_0<p$. In this case we have $\delta_{D,p}=\frac{1}{p-1}\lceil\frac{p-1}{d_0}\rceil$. Actually, if $\sum_D du_d\equiv 0 \mod p-1$, we have $\sum_D du_d\geq p-1$, and $\sum_D u_d\geq \lceil\frac{p-1}{d_0}\rceil$. Moreover, since $\lceil\frac{p-1}{d_0}\rceil\leq p-1$, if we have $\sum_D u_d= \lceil\frac{p-1}{d_0}\rceil$, then $\sum_D s_p(u_d)= \lceil\frac{p-1}{d_0}\rceil$. Now taking $u_{d_0}=\lfloor \frac{p-1}{d_0}\rfloor$ and $u_{d_1}=1$ for $d_1= p-1-d_0\lfloor \frac{p-1}{d_0}\rfloor$, we get $\delta_{D,p}=\frac{1}{p-1}\lceil\frac{p-1}{d_0}\rceil$.

Let $U=(u_d)$ be a minimal element in $E_{D,p}(m)$. We have $\sum_D du_d\leq d_0\lceil\frac{p-1}{d_0}\rceil\leq p-2+d_0< 2p-2$. Thus we must have $m=1$, and $\Phi(U)=\{1\}$. The minimal elements for $D$ and $p$ are necessarily in $E_{D,p}(1)$, thus irreducible, and they correspond to the solutions of the system
$$\left\{\begin{array}{rcl}
\sum_D du_d & = & p-1 \\
\sum_D u_d & = & \lceil\frac{p-1}{d_0}\rceil\\
\end{array}\right.$$
One can verify easily from its definition that the Hasse polynomial is exactly $\H_{D,p}^1=\left\{f^{\lceil\frac{p-1}{d_0}\rceil}\right\}_{p-1}$, the coefficient of degree $p-1$ on $f^{\lceil\frac{p-1}{d_0}\rceil}$. This gives a new proof of \cite[Theorem 1.1]{sz3}, with the slightly better bound $d_0<p$.

\subsection{Another example in characteristic three} 
When $p=3$, consider the set $D=\{1\leq i\leq 3^{n+1}-2,~(3,i)=1\}$. The $3$-weight of $D$ is $2n+1$, we denote by $d_i=3^{n+1}-3^{i}-1$, $0\leq i\leq n$ the elements with $3$-weight $2n+1$. Reasoning as above we get that the $3$-density of $D$ is $\frac{1}{2n+1}$, and $MI(D,3)^\ast=\{U\}$, where $U$ correspond to $d_n+3^nd_0\equiv 0 \mod 3^{2n+1}-1$. A rapid calculation gives $\Phi(U)=\{1,3,\cdots,3^{n},2,2\cdot 3,\cdots,2\cdot 3^{n-1}\}$. Thus $(D,3)$ satisfies (H), and the first vertex of $\GNP(D,3)$ is $(2n+1,1)$ with Hasse polynomial $\H_{D,2}^1=a_{d_n}a_{d_{0}}^{3^{n}}$. Thus $\NP_q(f)$ has this vertex iff $\alpha_{d_n}\neq 0$.

\end{document}